\numberwithin{equation}{section}
\numberwithin{subsection}{section}
\newcommand{\bbar}[1]{\setbox0=\hbox{$#1$}\dimen0=.2\ht0 \kern\dimen0 \overline{\kern-\dimen0 #1}}
\newtheorem*{namedtheorem}{\theoremname}
\newcommand{\theoremname}{testing}
\newtheorem{theorem}[subsection]{Theorem}
\newtheorem{proposition}[subsection]{Proposition}
\newtheorem{proposition-definition}[subsection]
{Proposition-Definition}
\newtheorem{corollary}[subsection]{Corollary}
\newtheorem{lemma}[subsection]{Lemma}
\theoremstyle{definition}
\newtheorem{definition}[subsection]{Definition}
\newtheorem{remark}[subsection]{Remark}
\theoremstyle{remark}
\DeclareMathOperator{\Ext}{Ext}
\newcommand\cC{\mathcal{C}}
\newcommand\cI{\mathcal{I}}
\newcommand\cK{\mathcal{K}}
\newcommand\cM{\mathcal{M}}
\newcommand\cO{\mathcal{O}}
\newcommand\ocM{\bbar{\mathcal{M}}}
\renewcommand\AA{\mathbb{A}}
\newcommand\GG{\mathbb{G}}
\newcommand\MM{\mathcal{M}}
\newcommand\NN{\mathbb{N}}
\newcommand\OO{\mathcal{O}}
\newcommand\PP{\mathbb{P}}
\newcommand\QQ{\mathbb{Q}}
\newcommand\RR{\mathbb{R}}
\newcommand\ZZ{\mathbb{Z}}
\newcommand\fM{\mathfrak{M}}
\newcommand\arr{\ifinner\to\else\longrightarrow\fi}
\def\displaytimes_#1{\mathrel{\mathop{\times}\limits_{#1}}}
\def\displayotimes_#1{\mathrel{\mathop{\bigotimes}\limits_{#1}}}
\newcommand\Spec{\operatorname{Spec}}
\newcommand{\proj}{\operatorname{Proj}}
\newcommand\doublelong[2]{\mathbin{\xymatrix{{}\ar@<3pt>[r]^{#1}
\ar@<-3pt>[r]_{#2}&}}}
\newlength{\ignora}
\renewcommand{\setminus}{\smallsetminus}
\theoremstyle{plain}
\newtheorem{lem}[subsection]{Lemma}
\newtheorem{prop}[subsection]{Proposition}
\theoremstyle{definition}
\newtheorem{rem}[subsection]{Remark}
\numberwithin{equation}{subsection}
\newcommand{\lra}{\longrightarrow}
\newcommand{\mf}[1]{\mathfrak{#1}}
\DeclareMathOperator{\Trop}{Trop}
\begin{document}

\title[{Chow Quotients of Toric Varieties as Moduli of Stable Log Maps}]{Chow Quotients of Toric Varieties\\ as Moduli of Stable Log Maps}

\author{Qile Chen}

\author{Matthew Satriano}

\address{Department of Mathematics\\
Columbia University\\
New York, NY 10027}
\email{q\rule[-2pt]{.1cm}{0.5pt}chen@math.columbia.edu}

\address{Department of Mathematics, University of Michigan, 2074 East Hall, Ann Arbor, MI 48109}
\email{satriano@umich.edu}

%\thanks{}
\date{\today}
\begin{abstract}
Let $X$ be a projective normal toric variety and $T_0$ a rank one subtorus of the defining torus of $X$. We show that the normalization of the Chow quotient $X//T_0$, in the sense of Kapranov-Sturmfels-Zelevinsky, coarsely represents the moduli space of stable log maps to $X$ with discrete data given by $T_0\subset X$.
\end{abstract} 
\maketitle

\tableofcontents

%%%%%%%%%

\section{Introduction}
Throughout, we work over an algebraically closed field $k$ of characteristic 0. 

Chow quotients of toric varieties were introduced by Kapranov, Sturmfels, and Zelevinsky in \cite{tquot}.  Given a projective normal toric variety $X$ and a subtorus $T_0$ of the defining torus $T$, the \emph{Chow quotient} $X//T_0$ has the property that its normalization is the smallest toric variety which maps onto all GIT quotients of $X$ by $T_0$.  We show in this paper that when $T_0$ has rank one, the normalization of $X//T_0$ can be reinterpreted as the coarse moduli space of the stack of stable log maps, introduced by Abramovich and the first author \cite{DF1, DF2}, and independently by Gross and Siebert \cite{grosssiebert}.

We begin by recalling the construction of $X//T_0$.  For every point $x\in X$, the closure $Z_x:=\overline{T_0 x}$ of the orbit of $x$ under $T_0$ is a subvariety of $X$.  For $x\in T$, the orbit closures $Z_x$ have the same dimension and homology class.  We therefore obtain a morphism from $T':=T/T_0$ to the Chow variety $C(X)$ of algebraic cycles of the given dimension and homology class.  The Chow quotient $X//T_0$ is defined as the closure of $T'$ in $C(X)$.  It is a toric variety and the fan of its normalization is given explicitly in \cite[\S1]{tquot}.

Further assume now that $T_0$ is a rank one torus.  Let $Z_1$ be the closure of $T_0$ in $X$.  Then its normalization $\widetilde{Z}_1$ is isomorphic to $\PP^1$ and the induced morphism
\[
f_1:\PP^1\lra X
\]
can be viewed as a stable map with two marked points $\{0,\infty\}=\PP^1\setminus f_1^{-1}(T)$.  Let $\beta_0$ be the curve class of the stable map $f_1$ and let $c_0$ and $c_\infty$ be the contact orders of $0$ and $\infty$ with respect to the toric boundary $X\setminus T$. Roughly speaking, $c_0$ and $c_\infty$ are functions which assign to the marked points their orders of tangency with the components of $X\setminus T$ (see \cite{evspace} for more details). In the toric case, the contact orders can be explained as the slopes and weights of the unbounded edges of tropical curves associated to stable log maps, see Section \ref{ss:log-data}.

Our primary object of study in this paper is the stack $\cK_{\Gamma_0}(X)$ parameterizing stable log maps from rational curves with two marked points to $X$ such that the curve class is $\beta_0$ and the marked points have contact orders given by $c_0$ and $c_\infty$; here $\Gamma_0:=(0,\beta_0,2,\{c_0,c_\infty\})$ keeps track of the discrete data consisting of genus, curve class, number of marked points, and their tangency conditions.  Our main result is: 
\begin{theorem}
\label{thm:chowcs}
The normalization of $X//T_0$ is the coarse moduli space of $\cK_{\Gamma_0}(X)$.
\end{theorem}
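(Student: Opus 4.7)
The plan is to produce a morphism $\phi\colon \widetilde{X//T_0}\lra K$ between the normalization of the Chow quotient and the coarse moduli space $K$ of $\cK_{\Gamma_0}(X)$, and then to show $\phi$ is an isomorphism. Both spaces contain the torus $T':=T/T_0$ as a dense open subscheme: on $\widetilde{X//T_0}$ this is the defining torus, while on $K$ it is the locus of translates $t\cdot f_1$ for $t\in T$, since any such translate is again a stable log map with discrete data $\Gamma_0$, and two translates coincide as log maps precisely when they differ by an element of $T_0$. This identifies $T'$ with a common open in the two spaces and endows $T'\subset \widetilde{X//T_0}$ with a tautological family of stable log maps to $X$.

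I would construct $\phi$ by extending this tautological family across the toric boundary of $\widetilde{X//T_0}$. By \cite[\S1]{tquot}, each cone $\sigma$ in the fan of $\widetilde{X//T_0}$ is described explicitly in terms of $\Sigma_X$ and the sublattice $N_0\subset N$ defining $T_0$. I expect such cones to be in bijection with the combinatorial types of rational tropical curves in $\Sigma_X$ with two unbounded edges of contact orders $c_0,c_\infty$ and total curve class $\beta_0$, and I would use the tropical-to-algebraic realization of stable log maps from \cite{DF1, DF2, grosssiebert} to lift each such tropical family on $\sigma$ to an algebraic family of stable log maps over the corresponding toric stratum. Gluing these local extensions produces $\phi$.

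To show $\phi$ is an isomorphism I would argue that it is bijective on closed points: every stable log map in $\cK_{\Gamma_0}(X)$ tropicalizes (cf.\ Section \ref{ss:log-data}) to a unique rational tropical curve of the prescribed type, which in turn selects a unique stratum of $\widetilde{X//T_0}$ and a unique point inside it. Combined with properness of $\widetilde{X//T_0}$, normality of both sides, and the birationality of $\phi$ over $T'$, Zariski's main theorem will then yield that $\phi$ is an isomorphism of coarse moduli spaces.

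The main obstacle is the combinatorial identification of the KSZ fan of $\widetilde{X//T_0}$ with the moduli of rational tropical maps to $\Sigma_X$ carrying the discrete data $\Gamma_0$. The rank-one hypothesis on $T_0$ is what makes this feasible, since the unbounded edges of the tropical curves are constrained to lie in the one-dimensional sublattice $N_0$; nevertheless, matching cones to combinatorial types and verifying that the lattice structures agree (so that the tropical family is defined over the toric stratum itself rather than a finite cover) is where the bulk of the argument will live.
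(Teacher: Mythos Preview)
Your plan goes in the opposite direction from the paper's, and that choice is what creates your ``main obstacle.''  The paper never builds a family of stable log maps over $\widetilde{X//T_0}$; instead it maps $\cK_{\Gamma_0}(X)$ \emph{to} the Chow quotient.  There is an obvious cycle map $F\colon \cK_{\Gamma_0}(X)\to C(X)$ sending $f$ to $f_*[C]$; irreducibility of $\cK_{\Gamma_0}(X)$ (Proposition~\ref{prop:irred}) forces $F$ to land in $X//T_0$, and normality of $\cK_{\Gamma_0}(X)$ (Proposition~\ref{prop:log-smooth}) lifts this to $G\colon \cK_{\Gamma_0}(X)\to \widetilde{X//T_0}$.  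Since $G$ is birational over $T'$ and both sides are normal and proper, Zariski's Main Theorem reduces everything to checking that $G$ is quasi-finite on closed points.  That is handled by Proposition~\ref{prop:chain-ofP1s}: the source curve is a chain of $\PP^1$'s with no contracted component, and a purity-of-branch-locus argument shows each component maps to its image with ramification only at the two special points, so the image cycle determines $f$ up to finitely many choices.

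Your route is not wrong in principle, but it replaces this short quasi-finiteness argument with two substantial constructions the paper entirely avoids: (i) the identification of the KSZ fan with the cone complex of rational tropical curves of type $\Gamma_0$, including the lattice check you flag; and (ii) the lifting of each tropical type to an actual family of stable log maps over the corresponding toric stratum.  Step (ii) is delicate because $\cK_{\Gamma_0}(X)$ is a Deligne--Mumford stack, possibly with nontrivial isotropy along the boundary, so a map from a scheme into it need not exist even when the pointwise data look right---you would at best obtain a map after a finite cover of each stratum, and then have to descend.  The paper's direction sidesteps this: the cycle map out of the moduli stack exists for free, and no fan comparison is ever needed.
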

\begin{remark}
\label{rmk:irred}
In particular, we see that $\cK_{\Gamma_0}(X)$ is irreducible.
\end{remark}
In the process of proving Theorem \ref{thm:chowcs}, we obtain an alternative description of $\cK_{\Gamma_0}(X)$ which is more akin to the construction of the Chow quotient.  As we saw above, $X//T_0$ is defined as the closure of $T':=T/T_0$ in the Chow variety $C(X)$.  Replacing $C(X)$ by other moduli spaces, we obtain alternate spaces birational to $X//T_0$.  Letting $Z_x$ be the orbit closure $\overline{T_0 x}$ as above, we see that for all $x\in T$, the normalization $\widetilde{Z}_x$ is isomorphic to $\PP^1$.  Thus, we obtain a stable map
\[
f_x:\PP^1\lra X
\]
with marked points $\{0,\infty\}=\PP^1\setminus f_x^{-1}(T)$.  These $f_x$ all have curve class $\beta_0$, and we obtain an immersion
\[
T'\lra \fM_{0,2}(X,\beta_0),
\]
where $\fM_{0,2}(X,\beta_0)$ denotes the Kontsevich space of stable maps to $X$ with genus 0, curve class $\beta_0$, and two marked points.  In analogy with the construction of the Chow variety, we let $\fM$ denote the closure of $T'$ in $\fM_{0,2}(X,\beta_0)$.  Then we have:
\begin{theorem}
\label{thm:comptwostacks}
$\cK_{\Gamma_0}(X)$ is the normalization of $\fM$.
\end{theorem}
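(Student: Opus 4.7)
The plan is to construct a natural forgetful morphism $\Phi : \cK_{\Gamma_0}(X) \to \fM$, show it is finite and birational, and establish normality of $\cK_{\Gamma_0}(X)$; the universal property of normalization will then identify $\cK_{\Gamma_0}(X)$ with $\widetilde{\fM}$.

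Forgetting the log structures on the source curves and on the target gives a morphism $\Phi' : \cK_{\Gamma_0}(X) \to \fM_{0,2}(X,\beta_0)$. The assignment $x \mapsto f_x$ from the introduction yields an open immersion $T' \hookrightarrow \cK_{\Gamma_0}(X)$ whose image consists of the stable log maps with smooth source and standard divisorial log structure at the two marked points. I would show this image is dense by producing, for every stable log map of type $\Gamma_0$, a one-parameter smoothing to a map with smooth source; this follows from the toric/tropical description of the local structure of stable log maps to the toric target $X$. Density implies irreducibility of $\cK_{\Gamma_0}(X)$ (matching Remark \ref{rmk:irred}), so $\Phi'$ factors through the closure $\fM$ of $T'$, producing $\Phi : \cK_{\Gamma_0}(X) \to \fM$; by construction $\Phi$ restricts to an isomorphism on $T'$, and is therefore birational. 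For finiteness, $\Phi$ is proper since $\cK_{\Gamma_0}(X)$ is proper (by the general properness of stacks of stable log maps) and $\fM$ is proper (as a closed substack of the Kontsevich space); quasi-finiteness then reduces to the combinatorial assertion that a fixed underlying stable map $(C \to X)$ of type $\Gamma_0$ admits only finitely many compatible log enhancements, classified by tropical curves of type $\Gamma_0$ compatible with the map of $C$ to the fan of $X$.

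The main obstacle is to establish normality of $\cK_{\Gamma_0}(X)$, for which the toric nature of $X$ is essential. The approach is to show that at any geometric point of $\cK_{\Gamma_0}(X)$ corresponding to a stable log map with tropical type $\tau$, the formal deformation space is toric --- governed by the monoid algebra of the minimal (basic) log structure determined by $\tau$ --- and hence normal. Granting normality, the finite birational morphism $\Phi$ from the normal stack $\cK_{\Gamma_0}(X)$ to $\fM$ factors through the normalization $\widetilde{\fM} \to \fM$, and the resulting morphism $\cK_{\Gamma_0}(X) \to \widetilde{\fM}$ is finite and birational between normal algebraic spaces with the same coarse moduli, hence an isomorphism by Zariski's main theorem.
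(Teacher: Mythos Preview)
Your overall strategy matches the paper's: build the forgetful map to $\fM$, show it is finite and birational, establish normality of $\cK_{\Gamma_0}(X)$, and conclude by Zariski's Main Theorem. The paper organizes this the same way (Lemmas~\ref{l:normalization}--\ref{l:XGammaopen} and Proposition~\ref{prop:irred}), and for finiteness it simply cites the general fact that the forgetful map from stable log maps to ordinary stable maps is representable and finite, rather than arguing quasi-finiteness by counting log enhancements.

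There is, however, a real gap in your argument. You assert that the image of $T'\hookrightarrow\cK_{\Gamma_0}(X)$ ``consists of the stable log maps with smooth source and standard divisorial log structure at the two marked points,'' i.e.\ that it coincides with the entire non-degenerate locus $\cK^\circ_{\Gamma_0}(X)$. This is not automatic: a priori there could be non-degenerate maps $\PP^1\to X$ with the prescribed contact orders that are not torus translates of $f_1$. Your smoothing argument only shows that $\cK^\circ_{\Gamma_0}(X)$ is dense in $\cK_{\Gamma_0}(X)$; it says nothing about whether $T'$ fills out $\cK^\circ_{\Gamma_0}(X)$. The paper proves this separately (Proposition~\ref{prop:irred}) by an explicit computation: translate so that $f(1)=1$, restrict to an affine toric chart $\Spec k[P]$, and observe that the contact-order constraints force $f^*(e_i)=t^{c_i}$ for each irreducible $e_i\in P$, whence $f=f_1$. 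Without this step, irreducibility of $\cK_{\Gamma_0}(X)$ (and hence the factorization of $\Phi'$ through $\fM$) is not established.

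On normality, the paper proves the stronger statement that $(\cK_\Gamma(X),\MM_{\cK_\Gamma(X)})$ is log smooth over $(\Spec k,\cO_k^*)$ (Proposition~\ref{prop:log-smooth}), via the obstruction computation $\Ext^1(f_0^*\Omega^1_{(X,\MM_X)/k},\cO_{C_0})\cong H^1(\cO_{C_0}^d)=0$, using $\Omega^1_{(X,\MM_X)/k}\cong\cO_X\otimes_\ZZ M$ and genus $0$. This gives normality directly and also yields density of the non-degenerate locus for free, making the smoothing argument unnecessary. Your ``the formal deformation space is toric'' is morally the same idea but less precise; the log-smoothness formulation is the clean way to carry it out.
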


\begin{remark}
There is an analogous picture if one assumes that $X$ is an affine normal toric variety and replaces $\fM_{0,2}(X,\beta_0)$ above by the toric Hilbert scheme, as defined in \cite{peevastillman}.  That is, for all $x\in T$, the $Z_x$ are $T'$-invariant closed subschemes of $X$ which have the same discrete invariants.  We therefore obtain an immersion from $T'$ to an appropriate toric Hilbert scheme.  The closure of $T'$ in this toric Hilbert scheme is called the main component.  In \cite[Thm 1.7]{loghilb}, Olsson shows that the normalization of the main component has a natural moduli interpretation in terms of log geometry.  Theorem \ref{thm:comptwostacks} above can therefore be viewed as an analogue of Olsson's theorem, replacing his use of the toric Hilbert scheme by the Kontsevich space.  That is, we show that the normalization of $\fM$ carries a moduli interpretation in terms of stable log maps.
\end{remark}

%\noindent\textbf{Organization:} In Section \ref{sec:logsm}, we prove that $\cK_\Gamma(X)$ is log smooth, and in particular normal.  This is a key ingredient in the proof of Theorem \ref{thm:comptwostacks}, which is also given in Section \ref{sec:logsm}.  In Section \ref{sec:tropcurves}, following \cite{NiSi, grosssiebert}, we explain the relationship between tropical curves and stable log maps to toric varieties.  While the use of tropical curves is not strictly necessary for this paper, they serve as a convenient language to state and prove the results.  Theorem \ref{thm:chowcs} is proved in Section \ref{sec:cs}.  
%%Theorems \ref{thm:irredtoric}, \ref{thm:comptwostacks}, and \ref{thm:chowcs} are proved in Sections \ref{sec:logsm}, \ref{sec:comparison}, and \ref{sec:cs}, respectively.
%In Appendix \ref{sec:genDF} we show that the natural log structure on a projective normal toric variety is generalized Deligne-Faltings.\\
%%In Section \ref{sec:genDF} we show that the natural log structure associated to a projective toric variety is generalized Deligne-Faltings. Theorems \ref{thm:irredtoric} and \ref{thm:comptwostacks} are proved in Section \ref{sec:logsm}.  We then prove Theorem \ref{thm:chowcs} in Section \ref{sec:cs}.  We end the paper with detailed examples in Section \ref{sec:ex}.\\\\

Recall that given any collection of discrete data $\Gamma=(g,\beta,n,\{c_i\}_{i=1}^n)$, it is shown in \cite{DF1, DF2, grosssiebert} that there is a proper Deligne-Mumford stack $\cK_\Gamma(X)$ which parameterizes stable log maps to $X$ from genus $g$ curves with $n$ marked points having curve class $\beta$ and contact orders given by the $c_i$.\footnote{Strictly speaking, \cite{DF1, DF2} only consider log schemes which are generalized Deligne-Faltings (see Definition \ref{def:genDF}), so to apply their theory, one must first show that the natural log structure on $X$ satisfies this hypothesis.  This is done in Proposition \ref{prop:genDF}, which we relegate to an appendix since the theory developed in \cite{grosssiebert} is already known to apply to toric varieties.} We show in Proposition \ref{prop:log-smooth} that if $g=0$, then $\cK_\Gamma(X)$ is log smooth, and in particular normal.  This is a key ingredient in the proof of Theorem \ref{thm:comptwostacks}, which we give in Section \ref{sec:logsm}.  In Section \ref{sec:tropcurves}, following \cite{NiSi, grosssiebert}, we explain the relationship between tropical curves and stable log maps to toric varieties.  While the use of tropical curves is not strictly necessary for this paper, they serve as a convenient tool to study the boundary of $\cK_\Gamma(X)$.  Theorem \ref{thm:chowcs} is then proved in Section \ref{sec:cs}.\\
\\
\noindent\textbf{Prerequisites:} We assume the reader is familiar with logarithmic geometry in the sense of Fontaine-Illusie-Kato 
(see for example \cite{kato} or \cite{logbook}).\\
\\
\noindent\textbf{Acknowledgments:} 
We would like to thank Dan Abramovich, Dustin Cartwright, Anton Geraschenko, Noah Giansiracusa, and Martin Olsson.  The first author was partially supported by the Simons Foundation. The second author was partially supported by NSF grant DMS-0943832 and an NSF postdoctoral fellowship (DMS-1103788).

\section{Log smoothness and irreducibility}
\label{sec:logsm}
Throughout this section, $X$ is a projective normal toric variety of dimension $d$ and $\Gamma$ is the discrete data $(0,\beta,n,\{c_i\})$.  Let $T$ be the defining torus of $X$ and $M$ be the character lattice of $T$.
\begin{proposition}\label{prop:log-smooth}
$(\cK_\Gamma(X),\MM_{\cK_\Gamma(X)})$ is log smooth over $(k,\OO_k^*)$.  Moreover, $\dim \cK_\Gamma(X) = \dim X + n-3$.
\end{proposition}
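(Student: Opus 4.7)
The plan is to exhibit $\cK_\Gamma(X)$ as log smooth over the stack $\fM^{\log}_{0,n}$ of prestable genus zero $n$-pointed log curves, which is itself known to be log smooth over $(k,\OO_k^*)$ of dimension $n-3$ (the standard theory of log curves). Adding relative and absolute dimensions will then yield $d+(n-3)=\dim X+n-3$.

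For the relative piece, I would invoke the deformation theory of stable log maps developed in \cite{DF1, DF2, grosssiebert}. The forgetful morphism $\cK_\Gamma(X) \to \fM^{\log}_{0,n}$ that remembers only the underlying log curve is equipped with a natural perfect obstruction theory whose obstruction sheaf is $R^1\pi_* f^* T^{\log}_{X/k}$, where $\pi\colon \cC \to \cK_\Gamma(X)$ is the universal log curve, $f\colon \cC \to X$ the universal log map, and $T^{\log}_{X/k}$ the log tangent sheaf of $X$ relative to $k$. Fiberwise vanishing of $R^1\pi_*$ will give log smoothness of this forgetful morphism, and the relative dimension will equal the rank of $\pi_* f^* T^{\log}_{X/k}$.

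The key toric input is that the log cotangent sheaf $\Omega^1_{(X,\MM_X)/k}$ is canonically isomorphic to $M \otimes_\ZZ \OO_X$, free of rank $d$; dually, $T^{\log}_{X/k} \cong M^\vee \otimes_\ZZ \OO_X$ is trivial. On any affine toric chart $\Spec k[P]$, log differentials are generated by $d\log \chi^m$ for $m \in M$ with no further relations, and this globalizes over $X$. For any stable log map from a genus zero prestable curve $C$, it follows that $f^* T^{\log}_{X/k} \cong \OO_C^{\oplus d}$. Since $C$ has arithmetic genus zero, $H^1(C,\OO_C)=0$ and $H^0(C,\OO_C) = k$, so the obstructions vanish and the relative dimension of $\cK_\Gamma(X) \to \fM^{\log}_{0,n}$ is exactly $d$.

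The main obstacle is matching up formalisms: one needs the relative log cotangent complex of $\cK_\Gamma(X)/\fM^{\log}_{0,n}$ to be computed by $R\pi_* f^* T^{\log}_{X/k}$ in the log category, which is the substance of the obstruction theory in \cite{DF1, DF2, grosssiebert}; and one needs $(X,\MM_X)$ to fit into the framework of \cite{DF1, DF2}, which the paper addresses via Proposition~\ref{prop:genDF}. Given these, the argument reduces cleanly to the triviality of the toric log tangent bundle and the vanishing of $H^1$ on a connected curve of arithmetic genus zero.
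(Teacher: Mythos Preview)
Your proposal is correct and follows essentially the same approach as the paper: factor through the forgetful map to the log stack of prestable genus-zero $n$-pointed curves, use that the toric log (co)tangent sheaf is free of rank $d$, and conclude by $H^1(C,\cO_C)=0$ for arithmetic genus zero. The only difference is packaging: the paper unpacks the relative log smoothness by hand via Olsson's stack $\mathcal{L}og_{(\fM_{0,n},\MM_{\fM_{0,n}})}$ and an explicit square-zero lifting argument, whereas you invoke the perfect obstruction theory $R\pi_*f^*T^{\log}_{X/k}$ from \cite{DF1,DF2,grosssiebert} as a black box; the underlying computation is identical.
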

\begin{proof}
The universal curve on $\cK_\Gamma(X)$ induces a morphism of log stacks:
\[
\pi:(\cK_\Gamma(X),\MM_{\cK_\Gamma(X)})\lra (\fM_{0,n},\MM_{\fM_{0,n}}),
\]
where $(\fM_{g,n},\MM_{\fM_{g,n}})$ denotes the log stack of $(g,n)$-prestable curves; see \cite{fkato1} and \cite[Thm 1.10]{logcurve} for the definition and construction of this log stack.  
%In fact, the log stack $(\fM_{g,n},\MM_{\fM_{g,n}})$ has its underlying structure given by the stack of genus $g$, $n$-marked pre-stable curves, and with its log structure given by the boundary divisor. Thus, it is log smooth. We reduce the proof of the statement to show that $\pi$ is log smooth.  
Since $(\fM_{g,n},\MM_{\fM_{g,n}})$ is log smooth over $(k,\OO_k^*)$, it suffices to show that $\pi$ is log smooth.  By \cite[Thm 4.6]{LogStack}, this is equivalent to showing that the induced morphism
\[
\pi':\cK_\Gamma(X)\lra\mathcal{L}og_{(\fM_{0,n},\MM_{\fM_{0,n}})}
\]
of stacks is smooth, where $\mathcal{L}og_{(S,\MM_S)}$ is the stack of log morphisms to a log scheme $(S,\MM_S)$, as defined in the introduction of (loc. cit.).\\
\\
Let $i:\Spec A\rightarrow\Spec A'$ be a square zero thickening of Artin local rings and let 
\[
\xymatrix{
\Spec A\ar[r]\ar[d]_{i} & \cK_{\Gamma}(X)\ar[d]^{\pi'}\\
\Spec A'\ar[r] & \mathcal{L}og_{(\fM_{0,n},\MM_{\fM_{0,n}})}
}
\]
be a commutative diagram.  We may view this as a commutative diagram of log stacks, by endowing the Artin local rings with the log structure pulled back from $\mathcal{L}og_{(\fM_{0,n},\MM_{\fM_{0,n}})}$. Hence the two vertical arrows are strict.  Denote the induced log structures on $\Spec A$ and $\Spec A'$ by $\MM_A$ and $\MM_{A'}$, respectively.  We therefore have a log smooth curve $h'$, a cartesian diagram
%\ar@/^1.5pc/[rr]^{f}
\[
\xymatrix{
(C,\MM_{C})\ar[r]\ar[d]_{h} & (C',\MM_{C'})\ar[d]^{h'}\\
(\Spec A,\MM_A)\ar[r] & (\Spec A',\MM_{A'})
}
\]
and a minimal stable log map $f:(C,\MM_C)\to (X,\MM_X)$, which we must show deforms to a minimal stable log map $f':(C',\MM_{C'})\to (X,\MM_X)$.
%We also have a minimal stable log map $f:(C,\MM_C)\to (X,\MM_X)$, which we must show deforms to a minimal stable log map $f':(C',\MM_{C'})\to (X,\MM_X)$.
Since the minimality condition is open by \cite[Prop 3.5.2]{DF1}, it suffices to show that $f$ deforms as a morphism of log schemes.\\
\\
By standard arguments in deformation theory, it is enough to consider the case where the kernel $\cI$ of $A'\to A$ is principal and killed by the maximal ideal $\mathfrak{m}$ of $A'$.  Then the obstruction to deforming $f$ to a morphism of log schemes lies in
\[
\Ext^1(f_0^*\Omega^1_{(X,\MM_X)/k},\cO_{C_0})\otimes_{k} \cI
%\mathfrak{m}^n/\mathfrak{m}^{n+1}
\]
where $f_0$ denotes the reduction of $f$ mod $\mathfrak{m}$, and $C_0$ denotes the fiber of $C$ over $A'/\mathfrak{m}=k$.  By \cite[Ex 5.6]{fkato2},
\[
\Omega^1_{(X,\MM_X)/k}\simeq\cO_X\otimes_\ZZ M.
\]
Therefore,
\[
\Ext^1(f_0^*\Omega^1_{(X,\MM_X)/k},\cO_{C_0})=H^1(\cO_{C_0}^d)=0
\]
where the last equality holds because $C_0$ is a curve of arithmetic genus $0$.  This shows that $(\cK_\Gamma(X),\MM_{\cK_\Gamma(X)})$ is log smooth.

To prove the claim about the dimension of $\cK_\Gamma(X)$, note that 
\[
\dim \Ext^0(f_0^*\Omega^1_{(X,\MM_X)/k},\cO_{C_0})= \dim H^{0}(\cO_{C_0}^{d}) = d,
\]
and so $\pi$ has relative dimension $d$.  Since $\dim \fM_{0,n} = n-3$, we see $\dim\cK_\Gamma(X)=d+n-3$.
\end{proof}

Let $\cK^{\circ}_{\Gamma}(X)$ denote the non-degeneracy locus, that is, the locus of $\cK_{\Gamma}(X)$ where the log structure $\MM_{\cK_\Gamma(X)}$ is trivial. By Proposition \ref{prop:log-smooth} and \cite[Prop 2.6]{niziol}, $\cK^{\circ}_{\Gamma}(X)$ is an open dense subset of $\cK_{\Gamma}(X)$. Consider the Kontsevich moduli space of stable maps $\fM_{0,n}(X,\beta)$. The forgetful map
\[
\Phi:\cK_{\Gamma}(X) \to \fM_{0,n}(X,\beta).
\]
sending a stable log map to its underlying stable map induces a locally closed immersion
\[
\cK_{\Gamma}^{\circ}(X) \to \fM_{0,n}(X,\beta).
\]
Let $\fM_{\Gamma}(X)$ be the closure of $\cK_{\Gamma}^{\circ}(X)$ in $\fM_{0,n}(X,\beta)$. Then $\Phi$ factors through a morphism
\[
\phi:\cK_{\Gamma}(X) \to \fM_{\Gamma}(X). 
\]
\begin{lemma}
\label{l:normalization}
$\phi$ is the normalization map.
\end{lemma}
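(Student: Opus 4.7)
The plan is to verify that $\phi$ is the normalization map by establishing three properties: (i) $\cK_\Gamma(X)$ is normal, (ii) $\phi$ is birational, and (iii) $\phi$ is finite. Together with the universal property of normalization, these force $\phi$ to be canonically isomorphic to the normalization of $\fM_\Gamma(X)$.

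Part (i) is immediate from Proposition \ref{prop:log-smooth}, since any log smooth stack over a field of characteristic zero is normal. For (ii), the open substack $\cK^\circ_\Gamma(X) \subset \cK_\Gamma(X)$ is dense by \cite[Prop 2.6]{niziol}, as noted just before the lemma, and it also sits as an open dense substack of $\fM_\Gamma(X)$ by the very definition of $\fM_\Gamma(X)$ as the closure of the locally closed immersion $\cK^\circ_\Gamma(X) \hookrightarrow \fM_{0,n}(X,\beta)$. Hence $\phi$ restricts to an isomorphism over $\cK^\circ_\Gamma(X)$, proving birationality.

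For (iii), $\cK_\Gamma(X)$ is proper by \cite{DF1, DF2}, and $\fM_\Gamma(X)$ is separated as a closed substack of the proper stack $\fM_{0,n}(X,\beta)$; hence $\phi$ is automatically proper. Finiteness thus reduces to quasi-finiteness, which is the crux of the argument. I would analyze the fiber of $\phi$ over a geometric point $[f_0 : C_0 \to X]$ of $\fM_\Gamma(X)$: a point of this fiber consists of a minimal log enhancement of $f_0$ with the prescribed contact orders $\{c_i\}$. Over a geometric point, such a minimal stable log map is determined by the underlying map together with a decorated tropical type in the fan of $X$, whose legs have direction and weight dictated by the $c_i$. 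Since the dual graph of $C_0$ has only finitely many vertices and edges, and the discrete data $\Gamma$ bounds the combinatorial complexity of compatible tropical curves, only finitely many such enhancements can arise, yielding quasi-finiteness.

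The hard part is this final finiteness claim: making precise and verifying that the set of minimal log enhancements of a fixed underlying stable map with prescribed discrete data is finite. This rests on the combinatorial classification of minimal log structures via tropical types, which is reviewed in Section \ref{sec:tropcurves}; the discreteness of the parametrizing data there is what ultimately gives quasi-finiteness. Once quasi-finiteness is established, properness promotes it to finiteness, and the universal property of normalization, applied to a finite birational morphism with normal source, identifies $\phi$ with the normalization map of $\fM_\Gamma(X)$.
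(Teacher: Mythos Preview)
Your overall architecture---normal source, birational, finite, hence normalization by Zariski's Main Theorem---is exactly the paper's. The difference is entirely in step (iii). The paper does not argue quasi-finiteness via tropical types at all: it simply invokes \cite[Corollary~3.10]{DF2} (together with Proposition~\ref{prop:genDF} to put $X$ in the generalized Deligne--Faltings setting), which already states that the forgetful map $\Phi:\cK_\Gamma(X)\to\fM_{0,n}(X,\beta)$ is representable and finite. Restricting to the closed substack $\fM_\Gamma(X)$ gives finiteness of $\phi$ in one line.

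Your route to (iii) is not wrong in spirit, but as written it is a sketch rather than a proof, and it has a small gap. The claim that a fixed underlying stable map admits only finitely many minimal log enhancements with prescribed contact orders is precisely the nontrivial content packaged into \cite[Corollary~3.10]{DF2}; Section~\ref{sec:tropcurves} of the present paper does not establish this classification, so you cannot appeal to it for that purpose. Moreover, to pass from ``proper and quasi-finite'' to ``finite'' for morphisms of Deligne--Mumford stacks you need representability of $\phi$, which you do not address; again this is supplied by \cite[Corollary~3.10]{DF2}. So the cleanest fix is to replace your entire argument for (iii) with that citation, after which your proof coincides with the paper's.
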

\begin{proof}
By \cite[Corollary 3.10]{DF2} and Proposition \ref{prop:genDF}, the morphism $\Phi$ is representable and finite, and so $\phi$ is as well.  Since $(\cK_\Gamma(X),\MM_{\cK_\Gamma(X)})$ is fs and log smooth over $(k,\OO_k^*)$ by Proposition \ref{prop:log-smooth}, it follows that $\cK_\Gamma(X)$ is normal.  Since $\phi$ is an isomorphism over $\cK^\circ_\Gamma(X)$, it is birational, and so by Zariski's Main Theorem, $\phi$ is the normalization map.
\end{proof}
For the rest of this section, we return to the setting and notation of the introduction, and let $\Gamma=\Gamma_0$.  Just as $X//T_0$ (resp $\fM$) is constructed by taking the closure of $T'$ in the Chow variety (resp the Kontsevich space), we can perform a similar construction with $\cK_\Gamma(X)$.  Namely, for $x\in T$, the stable map
\[
f_x: \PP^1\to X
\]
is naturally a stable log map.  We therefore obtain a morphism $T'\to \cK_\Gamma(X)$.  Let $\mf{X}_\Gamma$ denote the closure of $T'$ in $\cK_\Gamma(X)$.  
%, and let $\widetilde{\fM}$ denote the normalization of $\fM$.  
The forgetful morphism $\Phi$ 
%\[\Phi:\cK_\Gamma(X)\lra\fM_{0,2}(X,\beta)\]
then induces a map
\[
\phi':\mf{X}_\Gamma\lra \fM.
\]
\begin{lemma}
\label{l:XGammaopen}
$\mf{X}_\Gamma$ is an open substack of $\cK_\Gamma(X)$, and so $\phi'$ is the normalization map.
\end{lemma}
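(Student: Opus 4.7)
The plan is to prove that $\mf{X}_\Gamma$ is an irreducible component of $\cK_\Gamma(X)$. Since Proposition \ref{prop:log-smooth} shows $\cK_\Gamma(X)$ is log smooth over $(k,\cO_k^*)$ and hence normal, every point lies on a unique irreducible component, so the irreducible components are pairwise disjoint and each is automatically both open and closed. Granted this, the second assertion --- that $\phi'$ is the normalization --- follows from the fact that the normalization commutes with the decomposition into irreducible components: $\phi$ (which is the normalization of $\fM_\Gamma(X)$ by Lemma \ref{l:normalization}) restricts on the component $\mf{X}_\Gamma$ to the normalization of its image, and that image equals $\fM$ because $\phi$ is finite hence closed and $\phi(T') = T'$ is dense in both $\mf{X}_\Gamma$ and $\fM$.

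The heart of the argument is a dimension count. Since $T_0$ has rank one, $T' = T/T_0$ has dimension $d - 1$, where $d = \dim X$. The morphism $T' \to \cK_\Gamma(X)$, $[x] \mapsto f_x$, is injective on geometric points: two stable log maps $f_x$ and $f_y$ are isomorphic precisely when they differ by a reparametrization of $(\PP^1, 0, \infty)$ by some $\lambda \in \GG_m = T_0$, i.e.\ $y \in T_0 x$. Thus the image of $T'$ has dimension $d - 1$, and so does its closure $\mf{X}_\Gamma$. On the other hand, Proposition \ref{prop:log-smooth} applied with $n = 2$ gives $\dim \cK_\Gamma(X) = d + n - 3 = d - 1$, and its proof (the uniform computation of the relative dimension of $\pi : \cK_\Gamma(X) \to \fM_{0,n}$ as the rank-$d$ vector space $\Ext^0(f_0^*\Omega^1_{(X,\MM_X)/k}, \cO_{C_0})$) shows that $\cK_\Gamma(X)$ is equidimensional. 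The irreducible $\mf{X}_\Gamma$, being the closure of a subset of maximal dimension, is therefore a full irreducible component --- and by the first paragraph it is open.

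The main subtlety I anticipate is upgrading the dimension equality to the statement that $\mf{X}_\Gamma$ is an entire irreducible component. This requires the equidimensionality of $\cK_\Gamma(X)$, which in turn requires that log smoothness over $(k,\cO_k^*)$ produces a stack of locally constant dimension; but this is essentially contained in the relative-dimension computation already carried out in the proof of Proposition \ref{prop:log-smooth}, so no substantial additional work is needed. Once openness is in hand, the normalization conclusion is formal, and one could alternatively deduce it directly via Zariski's Main Theorem exactly as in Lemma \ref{l:normalization}, using that $\mf{X}_\Gamma$ inherits normality from $\cK_\Gamma(X)$ and that $\phi'$ is finite (restriction of $\phi$ to a closed substack) and birational (an isomorphism on the dense $T'$).
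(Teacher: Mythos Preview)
Your proof is correct and follows essentially the same approach as the paper: a dimension count showing that the $(d-1)$-dimensional irreducible closed substack $\mf{X}_\Gamma$ must be a full irreducible component of the normal, $(d-1)$-dimensional $\cK_\Gamma(X)$, hence open; then Zariski's Main Theorem gives the normalization statement. The only cosmetic difference is that the paper carries out the dimension comparison on the smooth open locus $\cK^{\circ}_\Gamma(X)$ (where equidimensionality is immediate from ordinary smoothness of $\pi$ over $\fM^{\circ}_{0,2}$), whereas you invoke equidimensionality of the whole log smooth stack---both are valid and amount to the same thing.
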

\begin{proof}
As in the proof of Lemma \ref{l:normalization}, $\phi'$ is representable and finite.  If $\mf{X}_\Gamma$ is an open substack of $\cK_\Gamma(X)$, it is then normal.  Since $\phi'$ is an isomorphism over $T'$, 
%dominant, we obtain a map $\psi:\mf{X}_\Gamma\to\widetilde{\fM}$.  Then $\psi$ is representable and birational.  Since $\mf{X}_\Gamma$ and $\widetilde{\fM}$ are proper, $\psi$ is as well.  
Zariski's Main Theorem shows that it is the normalization map.\\
\\
To show that $\mf{X}_\Gamma$ is open in $\cK_\Gamma(X)$, it suffices to prove that $\cK^{\circ}_\Gamma(X)$ and $\mf{X}^{\circ}_\Gamma:=\mf{X}_\Gamma\cap\cK^{\circ}_\Gamma(X)$ have the same dimension.  Since $T'$ is dense in $\mf{X}_\Gamma$, we see that $\mf{X}_\Gamma$ has dimension $d-1$.  On the other hand, the map
\[
\pi:(\cK_\Gamma(X),\MM_{\cK_\Gamma(X)})\lra (\fM_{0,2},\MM_{\fM_{0,2}})
\]
in the proof of Proposition \ref{prop:log-smooth} induces a map
\[
\cK^{\circ}_\Gamma(X)\lra\fM^{\circ}_{0,2},
\]
where $\fM^{\circ}_{0,2}$ denotes the open substack of $\fM_{0,2}$ with smooth fiber curves.  By Proposition \ref{prop:log-smooth}, we see that $\cK^{\circ}_{\Gamma}(X)$ has dimension $d-1$.
\end{proof}
Since $\phi'$ is the normalization map, to prove Theorem \ref{thm:comptwostacks}, we must show $\mf{X}_\Gamma = \cK_\Gamma(X)$.  Since $\mf{X}_\Gamma$ is an open and closed substack of $\cK_\Gamma(X)$, the following proposition suffices.
%concludes the proof of implies Theorem \ref{thm:comptwostacks}.
\begin{prop}
\label{prop:irred}
$\cK_{\Gamma}(X)$ is irreducible.  
%Moreover, $\cK^\circ_{\Gamma_0}(X)=T'$.
\end{prop}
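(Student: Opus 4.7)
The plan is to identify the non-degeneracy locus $\cK^\circ_\Gamma(X)$ with the torus quotient $T'$ and then deduce irreducibility of $\cK_\Gamma(X)$ from the density of $\cK^\circ_\Gamma(X)$.

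The action of $T$ on $X$ preserves the toric boundary and hence lifts to the log stack $(X,\MM_X)$; functorially, we obtain a $T$-action on $\cK_\Gamma(X)$, and in particular on $\cK^\circ_\Gamma(X)$. I would first argue that every point of $\cK^\circ_\Gamma(X)$ corresponds to a stable log map from a smooth $\PP^1$ with two marked points at $0$ and $\infty$ mapping to the toric boundary with contact orders $c_0$ and $c_\infty$, and with $\PP^1\setminus\{0,\infty\}\cong\GG_m$ mapping into $T$. Such a map must factor as $t\mapsto\mu(t)\cdot x$ for some cocharacter $\mu$ of $T$ and some $x\in T$. The contact orders $c_0$ and $c_\infty$ are determined by $\mu$ via pairing with the rays of the fan of $X$, and since these rays span $N_{\RR}$ (the fan is complete because $X$ is projective), the pair $(c_0,c_\infty)$ determines $\mu$ uniquely. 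Therefore $\mu=\mu_0$, the primitive cocharacter generating $T_0$, and every non-degenerate stable log map is a $T$-translate of $f_1$; i.e.\ $\cK^\circ_\Gamma(X)=T\cdot[f_1]$.

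Next, I would compute the stabilizer of $[f_1]$ in $T$: it consists of elements $t$ such that $t\cdot f_1$ is isomorphic to $f_1$ as a stable log map. Since the automorphisms of $\PP^1$ fixing the ordered pair $\{0,\infty\}$ form $\GG_m$ acting on $f_1$ through the cocharacter $\mu_0$, the stabilizer equals $T_0$, giving $\cK^\circ_\Gamma(X)\cong T/T_0=T'$. In particular, $\cK^\circ_\Gamma(X)$ is smooth and irreducible. Combined with the fact that $\cK^\circ_\Gamma(X)$ is open and dense in $\cK_\Gamma(X)$---a consequence of log smoothness (Proposition \ref{prop:log-smooth}) via \cite[Prop 2.6]{niziol}---this yields that $\cK_\Gamma(X)=\overline{\cK^\circ_\Gamma(X)}$ is irreducible, being the closure of an irreducible set.

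The main obstacle is the verification that every non-degenerate stable log map has its underlying curve smooth, meeting the boundary only at the two marked points with precisely the prescribed contact orders. This requires unpacking the minimality condition for stable log maps in the toric setting and translating the contact-order data (as elements of characteristic monoids of $\MM_X$ at the images of the marked points) into integer pairings with the rays of the fan, so as to conclude that the underlying cocharacter is rigidly determined to be $\mu_0$.
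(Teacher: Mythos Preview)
Your argument is correct and follows essentially the same route as the paper: both reduce to showing that $\cK^\circ_\Gamma(X)$ is a single $T$-orbit by proving every non-degenerate stable log map is a $T$-translate of $f_1$, and then invoke density of $\cK^\circ_\Gamma(X)$. The paper carries this out via a local coordinate computation (showing $f^*(e_i)=a_it^{c_i}$ with $a_i$ constant on an affine chart), whereas you phrase it globally in terms of cocharacters; these are the same computation, since a morphism $\GG_m\to T$ is automatically a cocharacter times a translation because $k[t,t^{-1}]^*=k^*\cdot t^{\ZZ}$.

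Your stated ``main obstacle'' is not a genuine difficulty. Triviality of $\MM_S$ forces the curve to be smooth (the log structure on $\fM_{0,2}$ is divisorial along the nodal locus), and for a log morphism $f:(C,\MM_C)\to(X,\MM_X)$ over such a base, $\bbar{\MM}_C$ is trivial away from the marked points; hence $f$ must send the complement of the markings into the locus where $\bbar{\MM}_X$ is trivial, namely $T$. The prescribed contact orders at the markings are part of the data $\Gamma$ by definition. With these points granted, your cocharacter argument goes through, and your additional identification $\cK^\circ_\Gamma(X)\cong T'$ (via the stabilizer computation) actually yields slightly more than the paper proves here.
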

\begin{proof}
It is enough to prove that $\cK^{\circ}_{\Gamma}(X)$ is irreducible.  Let $s\in\cK^{\circ}_{\Gamma}(X)(k)$, and $f:\mathbb{P}^1\to X$ be the stable log map corresponding to $s$.  Note that the log structure of the boundary of $X$ is everywhere non-trivial. Since the log structure is trivial at $s$, the image of $f$ necessarily meets $T$.  To prove that $\cK^{\circ}_{\Gamma}(X)$ is irreducible, it is enough to show that we can act on $f$ by an element of $T$ to obtain a map isomorphic to $f_1$ from the introduction.

After acting on $f$ by some element of $T$, we may assume that $f$ sends $1\in\PP^1$ to 
$1\in T\subset X$.  Choose a maximal cone $\sigma$ in the fan of $X$ such that the associated affine open toric variety $U\subset X$ contains $f(0)$.  Restricting $f$ to $U$, we obtain a map $f': V=\Spec k[t] \lra U$.

Let $P$ be the monoid $\sigma^{\vee}\cap M$ and let $e_1,\dots,e_\ell$ be the irreducible elements of $P$.  We see that for each $i$, 
\[
f^*(e_i)=t^{c_i}a_i,
\]
where $c_i$ is the contact order prescribed by $\Gamma$ and $a_i$ is some element of $k[t]$.  Note that if $\alpha\in k$ is a root of $a_i$, then the point $t=\alpha$ is mapped to the toric boundary; however, the contact order given by $\Gamma$ implies that $t=0$ is the only point in $V$ which maps to the boundary.  Hence, $a_i$ must be a power of $t$.  But if $a_i$ is divisible by $t$, then the contact order of $t=0$ along $e_i=0$ is greater than $c_i$.  Therefore, $a_i$ must be a non-zero constant.

Now observe that the point $1\in T\subset U$ is given by $e_i=1$ for all $i$.  Since $f(1)=1$, the equation $f^*(e_i)=t^{c_i}a_i$ shows that $a_i=1$.  This shows that $f$ is uniquely determined over $U$.  Since $f_1$ also satisfies these constraints, we see that $f$ and $f_1$ agree over $U$.  Since $f$ and $f_1$ are two maps from $\PP^1$ to $X$ which agree on a dense open subset of the source, they are equal.
\end{proof}

\section{Tropical curves associated to stable log maps}
\label{sec:tropcurves}
The goal of this section is to prove Proposition \ref{prop:chain-ofP1s}.  Following \cite{NiSi, grosssiebert}, we explain the connection between tropical curves and stable log maps to toric varieties.

\subsection{Review of tropical curves}
%We first recall several definitions from \cite[\S1]{NiSi}.
Let $\overline{G}$ be the geometric realization of a weighted, connected finite graph with weight function $\omega$.  That is, $\overline{G}$ is the CW complex associated to a finite connected graph with vertex set $\overline{G}^{[0]}$ and edge set $\overline{G}^{[1]}$, and 
\[
\omega:\overline{G}^{[1]}\to \NN
\]
is a function.  Here we allow $\overline{G}$ to have divalent vertices.  Given an edge $l \in \overline{G}^{[1]}$, we denote its set of adjacent vertices by $\partial l$. If $l$ is a loop, then we require $\omega(l)=0$.

Let $G^{[0]}_{\infty}\subset \overline{G}^{[0]}$ be the set of one-valent vertices, and let 
\[
G := \overline{G} \setminus \overline{G}^{[0]}_{\infty}.
\]
%The {\em genus} of $G$ is defined to be the first Betti number of $G$.
Let $G^{[1]}_{\infty}$ be the set of non-compact edges in $G$, which we refer to as {\em unbounded edges}. A {\em flag} of $G$ is a pair $(v,l)$ where $l$ is an edge and $v\in\partial l$.  We let $FG$ be the set of flags of $G$, and for each vertex $v$, we let
\[FG(v) :=  \{ (v,l) \in FG\}.\]
Let $N$ be a lattice and $M = N^{\vee}$. We let $N_{\QQ}:=N\otimes_{\ZZ}\QQ$ and $N_{\RR}:=N\otimes_{\ZZ}\RR$.

\begin{definition}\label{def:tropical-curve}
A {\em parameterized tropical curve} in $N_{\QQ}$ is a proper map $\varphi: G \to N_{\RR}$ of topological spaces satisfying the following conditions:
\begin{enumerate}
\item For every edge $l$ of $G$, the restriction $\varphi|_{l}$ acts as dilation by a factor $\omega(l)$ with image $\varphi(l)$ contained in an affine line with rational slope. If $\omega(l) = 0$, then  $\varphi(l)$ is a point.
 
\item For every vertex $v$ of $G$, we have $\varphi(v) \in N_{\QQ}$. 

\item For each $(v,l) \in FG(v)$, let $u_{v,l}$ be an primitive integral vector emanating from $\varphi(v)$ along the direction of $h(l)$. Then 
\[
\epsilon_v:=\sum_{(v,l)\in FG(v)}\omega(l)u_{v,l} = 0,
\]
which we refer to as the {\em balancing condition}.
\end{enumerate}
\end{definition}

An {\em isomorphism} of tropical curves $\varphi: G \to N_{\RR}$ and $\varphi' : G' \to N_{\RR}$ is a homeomorphism $\Phi: G \to G'$ compatible with the weights of the edges such that $\varphi = \varphi'\circ \Phi$.

%We sometimes use $\Gamma$ for a tropical curve, when there is no confusion about the map $h$.
A {\em tropical curve} is an isomorphism class of parameterized tropical curves.
%The following class of tropical curves plays an important role in this paper:
%\begin{definition}A tropical curve $h: G \to N_{\RR}$ is called a \emph{log discrete datum} if it has one vertex $v$, all edges are unbounded, and $h(v)=0$.\end{definition}
%The {\em degree} of a tropical curve $h: G\to N_\RR$ is the function $\Delta_h: N \setminus \{0\} \to \NN$ with finite support defined by \[\Delta_h(a) = \sharp\{ (v,l) \in G^{[1]}_{\infty}\ |\ \omega(l)\cdot u_{v,l} = a\}.\]Note that the balancing condition implies\[\sum_{a} \Delta_h(a)\cdot a = 0.\]

\subsection{Tropical curves from non-degenerate stable log maps}\label{ss:log-data}
Let $(X,\MM_X)$ be a toric variety with its standard log structure, and let $T \subset X$ be its defining torus. We denote by $N$ the lattice of one-parameter subgroups of $T$.  Let $f: (C,\MM_C) \to (X,\MM_X)$ be a stable log map over $(S,\MM_S)$ with $S$ a geometric point. Further assume that $f$ is non-degenerate; that is, the log structure $\cM_{S}$ is trivial. %Clearly in a family of stable log maps, the set of non-degenerate fibers form an open family.

In this subsection, we show how to assign a tropical curve $\Trop(f): G\to N_\RR$ to any such non-degenerate stable log map $f$.  To begin, let $G$ be the graph with a single vertex $v$, which we think of as being associated to the unique component of $C$, and with one unbounded edge for each marked point of $C$. We let $\Trop(f)(v)=0$.

%Then we need to define the weight $\omega$, and the map $h: G \to N_{\RR}$. 
Let $l$ be an edge corresponding to a marked point $p$ of $C$.  If $p$ has trivial contact orders, then we set $\omega(l) = 0$ and let $\Trop(f)$ contract $l$ to $0$.  Otherwise, the contact order is equivalent to giving a non-trivial map
\[
c_{l}: \ocM_{X,f(p)} \to \bbar{\MM}_{C,p}=\NN.
\]
Note that we have a surjective cospecialization map of groups
\[
M:=N^{\vee} \to \ocM^{gp}_{X,f(p)}
\]
corresponding to the specialization of the generic point of $T$ to $f(p)$.  Composing with $c_l^{gp}$, we obtain a map 
\[
\mu_{l}: M \to \ZZ,
\]
which defines an element $\mu_{l} \in N$. Let  $u_{l}$ be the primitive vector with slope given by $\mu_{l} \in N$. We define $\omega(l)$ to be the positive integer such that $\mu_{l} = \omega(l) u_{l}$, and define the image $\Trop(f)(l)$ to be the unbounded ray emanating from $0$ along the direction of $u_{l}$. This defines our desired map $\Trop(f): G \to N_{\RR}$ up to reparameterization.
\begin{proposition}
\label{prop:nondegentropcurve}
$\Trop(f): G \to N_{\RR}$ defines a tropical curve.
\end{proposition}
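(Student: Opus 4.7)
The plan is to verify the three conditions of Definition~\ref{def:tropical-curve}. Condition~(2) holds trivially since the unique vertex $v$ is sent to $0\in N_\QQ$. Condition~(1) is essentially built into the construction: for an edge $l$ with $\omega(l)>0$, the image $\Trop(f)(l)$ is the ray from $0$ along the primitive integral direction $u_l$ and $\Trop(f)|_l$ is (up to reparameterization) dilation by $\omega(l)$; edges with $\omega(l)=0$ are contracted to $0$, and there are no loops to worry about. The nontrivial content is therefore the balancing condition at the single vertex $v$.

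To verify balancing I would use the fact that the principal divisor of a rational function on the complete curve $C$ has degree zero. Given any character $m\in M$, the associated monomial $\chi^m$ is regular and nowhere vanishing on the torus $T\subset X$. Since $f$ is non-degenerate and $\MM_C$ is trivial away from the marked points, $f$ sends the complement of the marked points into $T$; hence $f^*\chi^m$ is a well-defined rational function on $C$ that is regular and nowhere vanishing outside the marked points.

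The key computation is that for a marked point $p$ corresponding to the unbounded edge $l$, the order of vanishing of $f^*\chi^m$ at $p$ is precisely $\mu_l(m)$. Indeed, the log pullback $f^\flat:\ocM_{X,f(p)}\to\ocM_{C,p}=\NN$ is, by definition, the contact order $c_l$ and records the order at $p$ of a boundary section; precomposing $c_l^{gp}$ with the cospecialization $M\to\ocM^{gp}_{X,f(p)}$ yields exactly $\mu_l$. Summing over the marked points and using $\sum_{p}\mathrm{ord}_p(f^*\chi^m)=0$ gives
\[
\sum_{l\in G^{[1]}_\infty}\omega(l)\langle u_l,m\rangle \;=\; 0 \quad\text{for every } m\in M,
\]
which forces $\epsilon_v=\sum_l\omega(l)u_l=0$ since $M$ separates points in $N_\RR$. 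Note that every unbounded edge is adjacent to $v$, so the sum above is exactly the one appearing in the balancing condition.

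The main obstacle is the sign and normalization bookkeeping needed to confirm the identification $\mathrm{ord}_p(f^*\chi^m)=\mu_l(m)$. The subtlety is that $\chi^m$ is only a local section of $\MM_X^{gp}$ near $f(p)$, so one must be careful to use the cospecialization from the generic point of $T$ to $f(p)$ with the correct orientation and to check that the resulting sign matches the convention for $u_l$ pointing outward from $v$. Once this identification is in place, the rest of the proof is just the principal-divisor-has-degree-zero argument on $\PP^1$.
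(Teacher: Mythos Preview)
Your proposal is correct and follows essentially the same approach as the paper: both reduce to the balancing condition and verify it by observing that each $m\in M$ yields a rational function $f^*\chi^m$ on $C$ whose principal divisor has degree zero, and that this degree computes $\epsilon_v(m)$. Your write-up is simply a more explicit unpacking of the identification $\mathrm{ord}_p(f^*\chi^m)=\mu_l(m)$, which the paper states in a single sentence.
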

\begin{proof}
It remains to check that the balancing condition holds.  That is, we must show $\epsilon_v=0$.  Note that every $m\in M$ defines a rational function on $C$ and that the degree of the associated Cartier divisor is $0=\epsilon_v(m)$.  Therefore, $\epsilon_v\in N=M^\vee$ is $0$.
\end{proof}

%Lastly, we show%%showing that the discrete data $\Gamma$ can be recovered from the log discrete datum $h: G\to N_\RR$:%\begin{lemma}\label{lem:log-data}The discrete data $\Gamma=(0,\beta,n,\{c_l\})$ is determined by the above log discrete datum $h: G \to N_{\RR}$.%% is equivalent to the discrete data $(\beta\in H_{2}(X,\ZZ), \{c_{l}\})$ for stable log maps.%\end{lemma}\begin{proof}The number of marked points $n$ is equal to the number of edges of $\Gamma$.  For a fixed edge $l$, we see that $u_l$ and $\omega(l)$ determine $c_l^{gp}$.  Since there is at most one morphism $\varphi_l: \ocM_{X,f(p)} \to \NN$ such that $\varphi_l^{gp}=c_l^{gp}$, this determines $c_l$.  Lastly, as shown in \cite{}, $\beta$ is determined by the degree function $\Delta_h$.\end{proof}

\subsection{Tropical curves from stable log maps over the standard log point}%one-parameter degenerations}
%We have seen that the generic point of $\cK_{\Gamma}(X)$ is described by a single tropical curve $\Gamma$. To describe the boundary of $\cK_{\Gamma}(X)$, we need to consider one parameter degeneration of stable log maps as in \cite{grosssiebert}. In terms of log geometry, this will be a map from the standard log point. 
Let $(X,\MM_X)$ be a toric variety with its standard log structure, and let $T\subset X$ be its defining torus.  Fix discrete data $\Gamma=(g,\beta,n,\{c_i\})$ and let $f: (C,\MM_C) \to (X,\MM_X)$ be a stable log map with discrete data $\Gamma$ over the standard log point $(S,\MM_S)$; that is, $S$ is a geometric point and $\MM_S$ is the log structure associated to the map $\NN\to\cO_S$ sending $1$ to $0$. This is equivalent to giving a (not necessarily strict) log map 
\[(S,\cM_{S}) \to (\cK_{\Gamma}(X), \cM_{\cK_{\Gamma}(X)}),\]
and the stable log map $f$ is obtained by pulling back the universal stable log map over $(\cK_{\Gamma}(X), \cM_{\cK_{\Gamma}(X)})$.  In this subsection, we associate a tropical curve 
\[
\Trop(f):G\to N_\RR
\]
to $f$ by modifying the construction given in \cite[\S1.3]{grosssiebert}.

We define $G$ to be the dual graph of $C$ where we attach an unbounded edge for each marked point.  Given a vertex $v$, let $t$ be the generic point of the corresponding component of $C$.  We therefore have a morphism
\[
\ocM_{X,f(t)} \to \ocM_{C,t}=\NN
\]
of monoids.  Taking the associated groups and composing with the cospecialization map $M \to \bbar{\cM}_{X,f(t)}^{gp}$ yields a map
\[
\tau_v:M\to \ZZ,
\]
and hence a point in $N$.  We define $\Trop(f)(v)=\tau_v$.

Let $l$ be an edge of $G$.  If $\partial l=\{v,v'\}$ and $v\neq v'$, then we define the image of $l$ under $\Trop(f)$ to be the line segment joining $\tau_v$ and $\tau_{v'}$.  In this case, $\tau_{v'}-\tau_v=e_l\mu_l$, where $e_l\in\bbar{\MM}_S=\NN$ is the section which smooths the node corresponding to $l$, and $\mu_l$ is an element of $N$.  We define $\omega(l)$ to be the positive integer such that $\mu_l=\omega(l)u_l$, where $u_l$ is a primitive integral vector.

Suppose now that $l$ is an unbounded edge corresponding to a marked point $p$.  If $p$ has trivial contact orders, then we set $\omega(l)=0$ and let $\Trop(f)$ contract $l$ to $\tau_v$, where $\partial l=\{v\}$.  Otherwise, the contact orders of $p$ define a non-trivial map
\[
c_l:\bbar{\MM}_{X,f(p)}\to\bbar{\MM}_{C,p}=\NN\oplus\bbar{\MM}_S\to \NN,
\]
where the last map is the projection.  Again taking the associated groups and composing with the cospecialization map $M \to \bbar{\cM}^{gp}_{X,f(p)}$, we obtain
\[
\mu_{l}: M \to \ZZ.
\]
We define $\omega(l)$ to be the positive integer such that $\mu_l=\omega(l)u_l$, where $u_l\in N$ is a primitive integral vector, and we let $\Trop(f)(l)$ be the unbounded ray emanating from $\tau_v$ in the direction of $u_l$.

\begin{prop}\label{prop:map-trop-curve}
$\Trop(f): G \to N_{\RR}$ defines a tropical curve.
% with degree given by $\Delta_h$.
\end{prop}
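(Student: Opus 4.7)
The plan is to verify the three conditions of Definition \ref{def:tropical-curve} at each vertex. Conditions (1) and (2) are immediate from the construction: each $\tau_v$ lies in $N \subset N_\QQ$; each bounded edge $l$ with $\partial l = \{v,v'\}$ is sent to the segment from $\tau_v$ to $\tau_{v'} = \tau_v + e_l\omega(l)u_l$, affine along the rational direction $u_l$ with dilation factor $\omega(l)$; and each unbounded edge is a rational ray with the analogous structure. The real content lies in the balancing condition $\epsilon_v = 0$ at each vertex.

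My approach is to test balancing against an arbitrary character $m \in M$ and reduce to the fact that a nonzero rational function on a proper curve has divisor of degree zero. Consider the log element $f^*\chi_m \in \cM^{gp}_C$ associated to $\chi_m$. By the definition of $\tau_v$, the image of $t^{-\langle m,\tau_v\rangle} f^*\chi_m$ in $\bbar{\cM}^{gp}_{C,\eta_v}$ is trivial, so it is invertible at the generic point $\eta_v$ of $C_v$; let $g_v$ denote its restriction to $C_v$, a nonzero rational function on the proper curve $C_v$. At any non-special point $p$ of $C_v$ the log structure is pulled back from $S$, so $f^\flat(\bar m) = \langle m,\tau_v\rangle\bar t$ at $p$ and $g_v$ is a unit, contributing zero to $\deg\mathrm{div}(g_v)$. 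Balancing will therefore follow from the flag-wise identification $\mathrm{ord}_p(g_v) = \langle m, \mu_{v,l}\rangle$ at each special point $p$ corresponding to a flag $(v,l)$: summing gives $\langle m,\epsilon_v\rangle = \sum_l \langle m,\mu_{v,l}\rangle = 0$ for all $m$, whence $\epsilon_v = 0$.

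It then remains to carry out the two local computations. At a marked point $p$ with unbounded edge $l$, $\bbar{\cM}_{C,p} = \NN \oplus \bbar{\cM}_S$, and the constraints from the contact order $c_l$ and generization to $\eta_v$ pin down $f^\flat(\bar m) = (c_l(m), \langle m,\tau_v\rangle)$; thus $f^*\chi_m = u\cdot z^{c_l(m)} t^{\langle m,\tau_v\rangle}$ for a local coordinate $z$ at $p$ and a unit $u$, giving $g_v = u|_{C_v}\, z^{c_l(m)}$ of order $c_l(m) = \langle m,\mu_{v,l}\rangle$. At a node $p$ shared with $C_{v'}$, I would use the standard local model $\bbar{\cM}_{C,p} = \NN^2 \oplus_\NN \bbar{\cM}_S$ with generators $\alpha,\beta$ (logs of the branch equations for $C_{v'}$ and $C_v$) satisfying $\alpha + \beta = e_l\bar t$. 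Writing $f^\flat(\bar m) = a\alpha + b\beta + c\bar t$, the generizations to $\eta_v$ and $\eta_{v'}$ yield $\langle m,\tau_v\rangle = be_l + c$ and $\langle m,\tau_{v'}\rangle = ae_l + c$, so $\langle m,\mu_{v,l}\rangle = a - b$. Using the relation $xy = (\mathrm{unit})\cdot t^{e_l}$ in $\cM^{gp}$ to eliminate $y$, we obtain $t^{-\langle m,\tau_v\rangle} f^*\chi_m = u\cdot x^{a-b}$ locally, and since $x$ restricts to a local parameter on $C_v$ at $p$, $\mathrm{ord}_p(g_v) = a - b = \langle m,\mu_{v,l}\rangle$.

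The main obstacle I expect is the bookkeeping at nodes: one must identify $\alpha = \log x$ with the log of the branch equation for $C_{v'}$ (so that $x$ generically restricts to a local parameter on $C_v$ and $\alpha \mapsto 0$ under generization to $\eta_v$, while $\beta \mapsto e_l\bar t$), and verify that the flag orientation $u_{v,l}$ matches the direction of $\mu_l$ chosen by $\tau_{v'} - \tau_v = e_l\mu_l$. Once these conventions are fixed consistently, the local computations go through and summing over flags yields the balancing.
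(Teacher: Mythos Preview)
Your argument is correct and follows exactly the same strategy as the paper: test the balancing condition against an arbitrary character $m\in M$, produce a rational function on the component $C_v$, and use that its divisor has degree zero. The paper's proof is a two-line sketch that asserts this degree equals $\epsilon_v(m)$ and cites \cite[Proposition~1.14]{grosssiebert} for the details; you have unpacked precisely those details, carrying out the local order computations at marked points and nodes that the paper leaves implicit.
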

\begin{proof}
We must check that the balancing condition holds for each vertex $v$ of $G$.  As in the proof of Proposition \ref{prop:nondegentropcurve}, every $m\in M$ defines a rational function on the irreducible component of $C$ corresponding to $v$.  The degree of the associated Cartier divisor is $0=\epsilon_v(m)$, and so $\epsilon_v=0$, \emph{c.f.} \cite[Proposition 1.14]{grosssiebert}.
\end{proof}

\begin{remark}
\label{rmk:specialization}
Let $R$ be the complete local ring of $\AA^1$ at the origin, and let $\MM_R$ be the log structure on $R$ induced by the standard log structure on $\AA^1$.  Denote the closed and generic points of $\Spec R$ by $0$ and $\eta$, respectively.  Suppose $h:(\cC,\cM_{\cC})\to (X,\cM_X)$ is a stable log map over $R$ with discrete data $\Gamma$ such that $h_0=f$.  Note that $h_\eta$ is a non-degenerate stable log map.  For each marked section $p:\Spec R\to \cC$, let $l_0$ and $l_\eta$ be the edges of the dual graphs of $\cC_0$ and $\cC_\eta$ corresponding to the marked points $p_0$ and $p_\eta$, respectively.  Consider the morphism
\[
\bbar{\cM}_{X}|_{h(p)}\to\bbar{\cM}_{\cC}|_p=\NN\oplus\bbar{\cM}_R\to\NN,
\]
where the last map is the projection.  Taking associated groups and precomposing with the map $M\to \bbar{\cM}^{gp}_{X}|_{h(p)}$, we obtain a map $M\to\ZZ$ of constant sheaves on $\Spec R$ whose special and generic fibers are $\mu_{l_0}$ and $\mu_{l_\eta}$.  Hence, we see $\mu_{l_0}=\mu_{l_\eta}$.
\end{remark}

The following result plays an important role in the proof of Theorem \ref{thm:chowcs}.

\begin{proposition}
\label{prop:chain-ofP1s}
If the discrete data $\Gamma$ is given by $g=0$, $n=2$, and $\beta \neq 0$, then $\Trop(f)$ is an embedding whose image is a line.  Moreover, $C$ is a chain of $\PP^1$s and $f$ does not contract any components of $C$.
\end{proposition}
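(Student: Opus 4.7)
The plan is to analyze the tropical curve $\Trop(f):G\to N_{\RR}$ in three steps.

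First, I sum the balancing condition over all vertices of $G$. For every bounded edge the two flag-contributions cancel, leaving $\omega(l_1)u_{l_1}+\omega(l_2)u_{l_2}=0$ for the two unbounded edges $l_1,l_2$. The hypothesis $\beta\neq 0$ rules out $\omega(l_1)=\omega(l_2)=0$: in that case every marked point would have trivial contact order, and the extremal-vertex argument of the next step (applied directly in $N_{\RR}$) would collapse $\Trop(f)$ to a single point, forcing every component of $C$ to map into a single torus orbit; since $C$ is proper and orbits of $T$ are affine, $f$ would then be constant, contradicting $\beta\neq 0$. Hence at least one $\omega(l_i)>0$, and primitivity of $u_{l_1},u_{l_2}$ then gives $u_{l_2}=-u_{l_1}$ and $\omega(l_1)=\omega(l_2)$.

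Second, I show $\Trop(f)(G)$ lies in a line $L$ parallel to $u_{l_1}$. Composing with the projection $\pi:N_{\RR}\to\bar N_{\RR}:=N_{\RR}/\RR u_{l_1}$ produces a balanced map $\bar\varphi:=\pi\circ\Trop(f)$ that contracts both unbounded edges. If some linear functional $\ell$ on $\bar N_{\RR}$ were non-constant on $\bar\varphi(G)$, pick a vertex $v_0$ maximizing $\ell\circ\bar\varphi$. For every bounded edge $l$ from $v_0$ to $v'$, the difference $\bar\varphi(v')-\bar\varphi(v_0)$ is a non-negative multiple of $\pi(u_{v_0,l})$, so maximality gives $\ell(\pi(u_{v_0,l}))\le 0$; combined with the projected balancing condition at $v_0$ (on which the unbounded edges contribute $0$ since their directions lie in $\RR u_{l_1}$), this forces $\ell(\pi(u_{v_0,l}))=0$ for every edge at $v_0$. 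Hence $\ell\circ\bar\varphi$ is constant on the neighbors of $v_0$, and by connectedness constant on $G$, contradicting the choice of $\ell$.

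Finally, I bound the vertex degrees of $G$ by $2$. If $v$ had degree $\ge 3$, removing $v$ would produce $\ge 3$ connected subtrees of which at most two can contain an unbounded edge; let $T'$ be a subtree with no unbounded edges, attached to $v$ by edge $l'$ at $v'\in T'$. Summing the balancing condition over the vertices of $T'$ (internal edges cancel) yields $\omega(l')u_{v',l'}=0$. If $\omega(l')>0$ this contradicts primitivity of $u_{v',l'}$; if $\omega(l')=0$, the extremal-vertex argument applied within $T'$ directly in $N_{\RR}$ forces $\Trop(f)(T')=\{\tau_v\}$, so every component indexed by $T'$ maps into a single torus orbit and is therefore contracted. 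Stability of the log map then requires $\deg_G(w)\ge 3$ for every $w\in T'$, which is impossible since any leaf $w$ of $T'$ has $\deg_G(w)\le\deg_{T'}(w)+1\le 2$. So every vertex of $G$ has degree $\le 2$; combined with $G$ being a connected tree carrying the unbounded edges $l_1,l_2$, we see $G$ is a path $v_0,\ldots,v_k$ with $l_1$ at $v_0$ and $l_2$ at $v_k$, and the balancing condition at each interior vertex forces strict monotonicity of $\tau_{v_i}$ along $L$. Thus $\Trop(f)$ is an embedding onto $L$, $C$ is a chain of $\PP^1$'s with each component carrying exactly two special points, and stability precludes any contracted component. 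I expect the main obstacle to be this degree bound, in particular the analysis of a contracted connecting edge $\omega(l')=0$, which must be ruled out using the stability of stable log maps rather than balancing alone.
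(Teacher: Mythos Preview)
Your argument is largely correct but takes a different route from the paper in the first step. The paper does not sum the balancing conditions to obtain $\mu_{l_1}+\mu_{l_2}=0$; instead it invokes log smoothness (Proposition~\ref{prop:log-smooth}) to produce a one-parameter smoothing $h$ of $f$ over a DVR, then uses Remark~\ref{rmk:specialization} to identify the unbounded-edge directions of $\Trop(f)$ with those of the non-degenerate generic fiber $\Trop(h_\eta)$, where $\beta\neq0$ immediately forces the two contact orders to be nonzero and opposite. Your Step~2 maximum-principle argument after projecting to $N_\RR/\RR u_{l_1}$ is equivalent to, and arguably cleaner than, the paper's ``walk'' argument producing an infinite chain of edges; and your Step~3 makes explicit the key fact $\omega(l')=0$ (obtained by summing balancing over $T'$), which the paper's phrase ``the argument in the preceding paragraph then shows that $\Trop(f)$ maps every edge of $T_1$ to a single point'' leaves implicit.

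The one step where your direct approach needs more care is the inference ``$\Trop(f)$ collapses to a point $\Rightarrow$ $f$ is constant,'' used both to rule out $\omega(l_1)=\omega(l_2)=0$ in Step~1 and to force contraction of the subcurve $C_{T'}$ in Step~3. Collapsing the tropical curve only tells you that the \emph{generic point} of each component lands in a fixed orbit $O_\sigma$; the proper-versus-affine argument requires the entire image to lie in $O_\sigma$, and a priori special points could escape to deeper strata of $\bar O_\sigma$ even when their edge directions $\mu_l$ vanish. Closing this requires unpacking the map $\bar f^\flat$ on characteristic monoids at nodes and markings rather than just its tropical shadow. The paper's deformation argument sidesteps this entirely in Step~1 by reducing to the non-degenerate case, where the base log structure is trivial and smooth non-marked points manifestly cannot meet the boundary; in its own Step~3 the paper makes essentially the same unjustified leap you do (``every special point of $C_1$ has a trivial contact order, and so $f$ contracts $C_1$''), so on that point your treatment is no less complete than the original.
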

\begin{proof}
Since $\cK_\Gamma(X)$ is log smooth by Proposition \ref{prop:log-smooth}, there exists a stable log map $h:(\cC,\cM_{\cC})\to (X,\cM_X)$ over $(R,\cM_R)$ as in Remark \ref{rmk:specialization}.  Let $p,p':\Spec R\to\cC$ be the two marked sections, and let $l_0$, $l'_0$, $l_\eta$, $l'_\eta$ be the corresponding edges of the dual graphs of $C$ and $\cC_\eta$.  Since $\beta\neq0$, the two marked points $p_\eta$ and $p'_\eta$ of $\cC_\eta$ have non-trivial contact orders.  The balancing condition for $\Trop(h_\eta)$ then shows $\mu_{l'_\eta}=-\mu_{l_\eta}\neq0$.  By Remark \ref{rmk:specialization}, we therefore have $\mu_{l'_0}=-\mu_{l_0}\neq0$.  In particular, $\Trop(f)$ maps $l_0$ and $l'_0$ to unbounded rays.

We next show that if $l$ is an edge of $G$, then $\Trop(f)(l)$ is a point, or it is a line segment or ray parallel to $\mu_{l_0}$.  Suppose $\Trop(f)(l)$ is not a point.  If $\Trop(f)(l)$ is unbounded, then $l$ is $l_0$ or $l'_0$, and so $\Trop(f)(l)$ is parallel to $\mu_{l_0}$.  Otherwise, $\Trop(f)(l)$ is a line segment and $\partial l=\{v,v_1\}$ with $v\neq v_1$.  If $\Trop(f)(l)$ is not parallel to $\mu_{l_0}$, then the balancing condition shows that there is an edge $l_1\neq l$ such that $v_1\in \partial l_1$ and $\Trop(f)(l_1)$ is not parallel to $\mu_{l_0}$.  Hence, $l_1$ is a line segment with endpoints $v_1$ and $v_2$.  Again, the balancing condition shows that there is an edge $l_2$ containing $v_2$ such that $\Trop(f)(l_2)$ is a line segment which is not parallel to $\mu_{l_0}$.  Since $C$ has genus $0$, we see $l$, $l_1$, and $l_2$ are distinct.  Continuing in this manner, we produce an infinite sequence of distinct edges $l_i$ of the dual graph of $C$.  This is a contradiction.

Lastly, we show that every irreducible component $A$ of $C$ has exactly two special points.  Hence, $C$ is a chain of $\PP^1$s, $f$ does not contract any component of $C$, and $\Trop(f)(G)$ is a line parallel to $\mu_{l_0}$.  Suppose $A$ is a component with at least three special points and let $v$ be the vertex of $G$ corresponding to $A$.  Then $G\setminus v$ is a disjoint union of non-empty trees $T_1,T_2,\dots,T_m$ with $m\geq3$.  Without loss of generality, $T_1$ only contains bounded edges.  The argument in the preceding paragraph then shows that $\Trop(f)$ maps every edge of $T_1$ to a single point.  If $C_1$ denotes the subcurve of $C$ corresponding to $T_1$, then we see that every special point of $C_1$ has a trivial contact order, and so $f$ contracts $C_1$.  Since $T_1$ is a tree, $C_1$ contains components with only two special points.  This contradicts the stability of $f$.
\end{proof}

\section{The Chow quotient as the coarse moduli space}
%The coarse space of $\cK_{\Gamma_0}(X)$}
\label{sec:cs}

Throughout this section, we let $\Gamma=\Gamma_0$ and $C(X)$ denote the Chow variety as in the introduction.  Let $K$ be the normalization of $X//T_0$.  Note that there is a map
\[
F:\cK_\Gamma(X)\longrightarrow C(X)
\]
sending a stable log map $f:(C,\MM_C)\to (X,\MM_X)$ to the image cycle $f_*[C]$.  Since $\cK_\Gamma(X)$ is irreducible by Theorem \ref{thm:comptwostacks}, $F$ factors as
\[
\cK_\Gamma(X)\stackrel{F'}{\longrightarrow} X//T_0\stackrel{i}{\longrightarrow} C(X),
\]
where $i$ is the natural inclusion.  Since $F$ is an isomorphism over $T'$ and $\cK_\Gamma(X)$ is normal, by Proposition \ref{prop:log-smooth}, we obtain an induced morphism
\[
G:\cK_\Gamma(X)\longrightarrow K
\]
To prove Theorem \ref{thm:chowcs}, we show
\begin{proposition}
\label{prop:coarse}
$G$ is a coarse space morphism.
\end{proposition}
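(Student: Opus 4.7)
The plan is to show that the induced map $\bar{G}: \bar{\cK} \to K$ on coarse moduli spaces is an isomorphism, where $\bar{\cK}$ denotes the coarse space of $\cK_\Gamma(X)$; the latter exists as an algebraic space by the Keel--Mori theorem, since $\cK_\Gamma(X)$ is a proper Deligne--Mumford stack. Granting this, $G$ is automatically a coarse space morphism. Both $\bar{\cK}$ and $K$ are normal --- the former because $\cK_\Gamma(X)$ is log smooth by Proposition \ref{prop:log-smooth}, the latter by construction --- and $\bar{G}$ is proper (from the properness of $\cK_\Gamma(X)$) and birational (as $G$ restricts to an isomorphism on $T' \subset \cK_\Gamma(X)$ with image $T' \subset K$). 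By Zariski's Main Theorem it then suffices to show that $\bar{G}$ is quasi-finite.

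For quasi-finiteness, I would fix a geometric point $z \in K$ corresponding to a cycle $Z$ on $X$ and argue that the set of isomorphism classes of stable log maps $f: (C,\cM_C) \to (X,\cM_X)$ with $f_*[C] = Z$ is finite. The key tool is Proposition \ref{prop:chain-ofP1s}: any such $f$ has $C$ a chain of $\PP^1$s on which $f$ is non-contracting, and $\Trop(f)$ is an embedding whose image is a line parallel to $\mu_{l_0} \in N_\RR$. Consequently, the dual graph of $C$ is a chain, the vertex positions $\tau_v$ lie on the tropical line in the order inherited from the chain, and each component $C_v$ maps surjectively onto $\bar{\cO}_{\sigma_v}$, where $\sigma_v$ is the minimal cone of the fan containing $\tau_v$. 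The cycle $Z$ then records, for each cone $\sigma$ hit by the tropical line, the sum of the degrees of the component maps landing in $\bar{\cO}_\sigma$; combined with the tropical structure and the fixed contact orders at the two marked points, this should recover the combinatorial type of $C$ together with each individual degree, leaving only finitely many possibilities for the underlying stable map.

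The main obstacle is the rigidity statement for the underlying stable map itself. On each component $C_v$, one must show that the map to its image $\bar{\cO}_{\sigma_v} \simeq \PP^1$ is determined up to the ambient torus action by its degree and its contact orders at the two special points, and that the compatibilities imposed at each node --- where the images of adjacent components must meet at a torus-fixed point prescribed by the fan --- glue these local rigidifications together into a unique underlying stable map. This amounts to extending the local toric computation in the proof of Proposition \ref{prop:irred} from a single $\PP^1$ sitting inside one maximal affine chart to a chain of $\PP^1$s threading through several cones along the tropical line; the rank-one assumption on $T_0$ is essential, since it forces $\Trop(f)$ to be a line and thereby linearly orders the path through the fan. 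Once the underlying stable map is pinned down, the log structure is forced to be the minimal one attached to it and $\Gamma_0$, yielding a unique stable log map up to isomorphism and completing the proof.
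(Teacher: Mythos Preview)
Your reduction is exactly the paper's: pass to coarse spaces, note both sides are normal and proper, observe $G$ is birational over $T'$, and conclude by Zariski's Main Theorem once quasi-finiteness is known. You also correctly isolate the content of quasi-finiteness as the claim that, for a fixed image cycle, only finitely many stable log maps can occur, and you invoke Proposition~\ref{prop:chain-ofP1s} to bound the number of components and reduce to rigidity of each component map $C_v \to \widetilde{Z}_i \cong \PP^1$.

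Where you diverge from the paper is in the treatment of that rigidity step. You propose to extend the explicit chart computation of Proposition~\ref{prop:irred} along the chain. The paper instead argues by \emph{deformation and purity of the branch locus}: using irreducibility, it places the given stable log map as the central fiber of a one-parameter family over $\Spec R$ whose generic fiber is an embedding; then, on the complement of the special points, the purity theorem forces the ramification divisor to be empty (it cannot meet the generic fiber, and it cannot be supported entirely in the central fiber without being zero). When $X$ is singular, the paper lifts the family through a toric resolution $\widetilde{X}\to X$ and descends. Once ramification is confined to the two special points, each component map $\PP^1\to\PP^1$ is determined by its degree, giving finiteness immediately.

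Your route is plausible but has a gap as written. The computation in Proposition~\ref{prop:irred} hinges on the base log structure being trivial, which forces every non-marked point to land in $T$ and hence every $f^*(e_i)$ to be a monomial. Over the standard log point that mechanism disappears: a non-special point of $C_v$ could a priori map to a deeper stratum of $\bbar{\cO}_{\sigma_v}$, and nothing in your sketch rules out extra ramification there. You would need an additional argument---for instance, that the constancy of $\bbar{\cM}_C$ along $C_v$ away from special points forces those points into the open orbit of the target stratum---before the monomial-type calculation of Proposition~\ref{prop:irred} can be repeated. The paper's purity argument sidesteps this entirely; it is the main idea you are missing.
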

\begin{proof}
Since both $\cK_\Gamma(X)$ and $K$ are normal and proper, and since $G$ is an isomorphism over $T'$, by Zariski's Main Theorem, it suffices to show $G$ is quasi-finite.  To do so, it is enough to show $F'$ is quasi-finite at the level of closed points.  That is, we show that if $x\in X//T_{0}$ is a closed point and $E_{x}$ denotes the corresponding cycle of $X$, then there are finitely many stable log maps whose image cycles are given by $E_{x}$. Let
\[
E_{x} = \sum a_{i}Z_{i},
\]
where the $a_{i}$ are positive integers and the $Z_{i}$ are reduced irreducible closed subschemes of $X$.  
%Now we need to construct a usual stable map $f:C \to X$ with images given by $E_{x}$, and possibly lifts to a stable log map in $\cK$. 
Let $\widetilde{Z}_{i}$ be the normalization of $Z_{i}$. Since $E_{x}$ is of dimension $1$, we have $\widetilde{Z}_{i}\simeq \PP^{1}$.

We claim that if $f:(C,\MM_C)\to (X,\MM_X)$ is a stable log map that defines a closed point of $\cK_\Gamma(X)$ such that the image cycle of $f$ is $E_x$, then 
% \item All special points of the possible usual stable map $f$ is determined uniquely by $x$, and\footnote{I don't think we need this, only (2).  In other words, the number of components of $C$ is bounded, so the number of components that map to a given $Z_i$ is bounded.  Then just apply the argument already written.}
$f$ can only be ramified at the special points of $C$.  Given this claim, $F'$ is quasi-finite.  Indeed, since Proposition \ref{prop:chain-ofP1s} shows that no component of $C$ is contracted under $f$, the number of irreducible components of $C$ is bounded by $\sum a_{i}$. For each irreducible component $A$ of $C$, the map $f|_A$ factors as
\[
A \longrightarrow \widetilde{Z}_{i} \longrightarrow X
\]
for some $i$.  Since the first map $A \to \widetilde{Z}_{i}$ can only be ramified at the two fixed special points, it is determined by the degree of $f|_A$.  Thus, there are only finitely many choices for $f$.

It remains to prove the claim. By Proposition \ref{prop:irred}, $\cK_\Gamma(X)$ is irreducible and $T'$ is dense, so there exists a toric morphism $\AA^1\to\cK_\Gamma(X)$ such that the fiber over $0\in\AA^1$ is our given stable log map $f:(C,\MM_C)\to (X,\MM_X)$ whose image cycle is $E_x$.  Let $R$ denote the complete local ring $\widehat{\cO}_{\AA^1,0}$ and let
\[
\xymatrix{
\cC\ar[r]^h\ar[d] & X\\
\Spec R & 
}
\]
be the associated stable map.  Let $\eta\in\Spec R$ be the generic point.

We first handle the case when $X$ is smooth. Let $\cC^{\circ}$ be the open subset of $\cC$ obtained by removing the special points. Note that $\cC^{\circ}$ is normal, and $h|_{\cC^{\circ}}$ is quasi-finite by Proposition \ref{prop:chain-ofP1s}. By the purity of the branch locus theorem \cite[p.461]{altmankleiman}, if $h|_{\cC^{\circ}}$ is ramified, then the ramification locus $D$ is pure of codimension 1.  Since $h|_{\cC^{\circ}}$ is not everywhere ramified over the central fiber, $D$ must intersect the generic fiber.  However, $h|_{\cC^{\circ}}$ is an embedding over the generic fiber, so we conclude that $D$ is empty.

We now consider the case when $X$ is singular.  Let $p: \widetilde{X}\to X$ be a toric resolution.  We may replace $R$ by a ramified extension, as this does not affect the set of closed points.  Since the natural map $\cK_{\Gamma}(\widetilde{X})\to \cK_\Gamma(X)$ is proper, by the valuative criterion, we can assume we have a stable map $\widetilde{h}:\widetilde{\cC}\to \widetilde{X}$ and a commutative diagram
\[
\xymatrix{
\widetilde{\cC} \ar[r]^{\widetilde{h}} \ar[d]_q & \widetilde{X}\ar[d]^{p}\\
\cC\ar[r]^h  & X
}
\]
over $R$.  Here $h$ is obtained by taking the stabilization of the prestable map $p\circ\widetilde{h}$.  The previous paragraph shows that $\widetilde{h}$ only ramifies at the special points.  Since Proposition \ref{prop:chain-ofP1s} shows that $\widetilde{\cC}$ and $\cC$ are both chains of $\PP^1$s, we see that $h$ only ramifies at the special points as well.
%From the smooth case, we deduce that the composition $p\circ\widetilde{g}$ only ramifies at the special points and the contracted components.  By Lemma \ref{lem:contracted-comp}, $\widetilde{\cC}$ is a chain of $\PP^1$s with two marked points, so the composition $g\circ q$ is ramified at the special points and the components contracted by $q$.  Therefore, $g$ is only ramified at the special points.
\end{proof}

{\section*{Appendix A: Toric varieties have generalized Deligne-Faltings log structures}
\renewcommand{\thesection}{A}
\refstepcounter{section}
%\addcontentsline{toc}{section}{Appendix A: Projective normal toric varieties carry generalized Deligne-Faltings log structures}
\label{sec:genDF}
The theory of moduli spaces of stable log maps $\cK_\Gamma(Y,\MM_Y)$ is developed in \cite{DF1,DF2} and \cite{grosssiebert} for different classes of log schemes $(Y,\MM_Y)$.  In \cite{DF1,DF2}, Abramovich and the first author consider log schemes 
%with a surjective map $\NN^{n} \to \bbar{\MM}_{Y}$, which in particular covers the
which are generalized Deligne-Faltings 
%for which there is a surjection $\NN^{n} \to \ocM_{Y}$, which in particular covers the case of 
(see Definition \ref{def:genDF}); in \cite{grosssiebert}, Gross and Siebert 
%construct the stack $\cK_\Gamma(Y,\MM_Y)$ when $\MM_{Y}$ is a 
consider log schemes which are quasi-generated Zariski. It is shown in \cite[Prop 4.8]{DF2} that when $(Y,\cM_{Y})$ is both generalized Deligne-Faltings and quasi-generated Zariski, the Abramovich-Chen and Gross-Siebert constructions are identical. Gross-Siebert show that the standard log structure $\MM_X$ on a normal toric variety $X$ is always quasi-generated Zariski.  Here we show that if $X$ is also projective, then $\MM_X$ is generalized Deligne-Faltings.  Therefore, the two theories agree for projective normal toric varieties.
%; however, it is enlightening to know that the theory developed in \cite{DF1,DF2} also applies to normal toric varieties.
%In addition to enlarging the known class of log schemes where the two theories agree, this appendix is necessary for this paper: in the proofs of Theorems \ref{thm:chowcs} and \ref{thm:comptwostacks}, we use the fact that the forgetful map from $\cK_\Gamma(Y,\MM_Y)$ to the Kontsevich space is finite.  This finiteness result is proved in the generalized Deligne-Faltings case in \cite[Corollary 3.10]{DF2}.  Since this result is not yet available for all quasi-generated Zariski log structures, we must show that projective normal toric varieties have generalized Deligne-Faltings log structures.
%While this proof should generalize to the case in \cite{grosssiebert} with more complicated notations. We do not pursue it here. But we remind the reader that this is the only place we use the condition that the target is projective.
\begin{definition}
\label{def:genDF}
A log structure $\MM_Y$ on a scheme $Y$ is called \emph{generalized Deligne-Faltings} if there exists a fine saturated sharp monoid $P$ and a morphism $P\to\bbar{\MM}_Y$ which locally lifts to a chart $P\to\MM_Y$.
\end{definition}
\begin{remark}
\label{rmk:genDF}
Given a fine saturated sharp monoid $P$, let $A_P=\Spec k[P]$ with its standard log structure $\MM_{A_P}$.  Then there is a natural action of $T_P:=\Spec k[P^{gp}]$ on $(A_P,\MM_{A_P})$ induced by the morphism $P\to P\oplus P^{gp}$ sending $p$ to $(p,p)$.  The log structure $\MM_{A_P}$ descends to yield a log structure $\MM_{[A_P/T_P]}$ on the quotient stack $[A_P/T_P]$.  By \cite[Rmk 5.15]{LogStack}, a log scheme $(Y,\MM_Y)$ is generalized Deligne-Faltings if and only if there exists a strict morphism
\[
(Y,\MM_Y)\lra ([A_P/T_P],\MM_{[A_P/T_P]})
\]
for some fine saturated sharp monoid $P$.
\end{remark}
Let $X$ be a projective normal toric variety and let $\MM_X$ be its standard log structure.  Let $Q\subset \RR^n$ be a polytope associated to a sufficiently positive projective embedding of $X$.  Placing $Q$ at height 1 in $\RR^n\times\RR$ and letting $P$ be the monoid of lattice points in the cone over $Q$, we have $X=\proj k[P]$.  Note that $P$ is fine, saturated, and sharp.  
%Since $X$ is normal, choosing $Q$ appropriately, we can assume that $P$ is fine and saturated.  
Let $(A_P,\MM_{A_P})$ be as in Remark \ref{rmk:genDF}, let $U$ be the compliment of the closed subscheme of $A_P$ defined by the irrelevant ideal of $k[P]$, and let $\MM_U=\MM_{A_P}|_U$.  The function $\deg:P\to\ZZ$ sending an element to its height induces a $\GG_m$-action on $(A_P,\MM_{A_P})$.  Hence, $\MM_U$ descends to yield a log structure $\MM_P$ on $X$.
\begin{lemma}
\label{l:MPgenDF}
$\MM_P$ is generalized Deligne-Faltings.
\end{lemma}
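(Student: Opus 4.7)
By Remark \ref{rmk:genDF}, it suffices to construct a strict morphism
\[
(X, \MM_P) \longrightarrow ([A_P/T_P], \MM_{[A_P/T_P]}).
\]
The approach will be to obtain this map as the $\GG_m$-descent of the strict open immersion $(U, \MM_U) \hookrightarrow (A_P, \MM_{A_P})$.

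The first step is to observe that the $\GG_m$-action on $A_P$ arising from the $\ZZ$-grading by $\deg$ factors through the natural $T_P$-action on $(A_P, \MM_{A_P})$, via the group scheme homomorphism $\GG_m \to T_P$ dual to $\deg: P^{gp} \to \ZZ$. This is a short computation on coactions: the composite $P \to P \oplus P^{gp} \xrightarrow{\id \oplus \deg} P \oplus \ZZ$ agrees with the grading coaction $p \mapsto (p, \deg p)$. Consequently, the open immersion $U \hookrightarrow A_P$ is $\GG_m$-equivariant, and the composite
\[
(U, \MM_U) \hookrightarrow (A_P, \MM_{A_P}) \longrightarrow ([A_P/T_P], \MM_{[A_P/T_P]})
\]
is a $\GG_m$-invariant, strict morphism of log stacks.

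Since $\MM_P$ is by construction the $\GG_m$-descent of $\MM_U$ along $U \to X$, the above composite will descend to a morphism $(X, \MM_P) \to ([A_P/T_P], \MM_{[A_P/T_P]})$. The main subtlety I expect is verifying strictness of the descended morphism. For this, I plan to pull back $\MM_{[A_P/T_P]}$ along $U \to X \to [A_P/T_P]$ and observe that the result equals $\MM_U$ by strictness of each factor; since this pullback carries its canonical $\GG_m$-equivariance, it realizes the descent datum defining $\MM_P$, so by uniqueness of descent the pullback of $\MM_{[A_P/T_P]}$ to $X$ coincides with $\MM_P$. This gives the desired strict morphism and completes the proof that $\MM_P$ is generalized Deligne-Faltings.
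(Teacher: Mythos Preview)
Your proposal is correct and follows essentially the same route as the paper: both hinge on the observation that the grading coaction $p\mapsto(p,\deg p)$ factors as $P\to P\oplus P^{gp}\to P\oplus\ZZ$, so the $\GG_m$-action on $(A_P,\MM_{A_P})$ is obtained from the $T_P$-action via $\GG_m\to T_P$. The only cosmetic difference is that the paper passes through the intermediate quotient $([A_P/\GG_m],\MM_{[A_P/\GG_m]})$, which makes strictness automatic as a composite of strict maps, whereas you argue strictness directly by descent; one small wording caution is that your phrase ``by strictness of each factor'' should refer to the factorization $U\to A_P\to[A_P/T_P]$ rather than $U\to X\to[A_P/T_P]$, since the latter is what you are proving.
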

\begin{proof}
We have a cartesian diagram
\[
\xymatrix{
(U,\MM_U)\ar[r]\ar[d] & (A_P,\MM_P)\ar[d]\\
(X,\MM_P)\ar[r] & ([A_P/\GG_m],\MM_{[A_P/\GG_m]})
}
\]
where all morphisms are strict and the vertical morphisms are smooth covers.  Note that the $\GG_m$-action on $(A_P,\MM_{A_P})$ is induced from the morphism $\sigma:P\to P\oplus\ZZ$ defined by $p\mapsto (p,\deg p)$.  Since $\sigma$ factors as
\[
P\lra P\oplus P^{gp}\lra P\oplus\ZZ
\]
where the first map is $p\mapsto(p,p)$ and the second is $(p,\xi)\mapsto (p,\deg \xi)$, we see that there is a strict smooth cover
\[
([A_P/\GG_m],\MM_{[A_P/\GG_m]})\lra ([A_P/T_P],\MM_{[A_P/T_P]}).
\]
Hence, Remark \ref{rmk:genDF} shows that $\MM_P$ is generalized Deligne-Faltings.
\end{proof}
Note that $\MM_P|_T=\cO_T^*$, where $T$ is the torus of $X$.  We therefore obtain a map
\[
\psi:\MM_P\lra j_*^{log}\cO_T^*=:\MM_X.
\]
\begin{proposition}
\label{prop:genDF}
$\psi$ is an isomorphism, and so $(X,\MM_X)$ is generalized Deligne-Faltings.
\end{proposition}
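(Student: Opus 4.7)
The plan is to check that $\psi$ is an isomorphism after pulling back along the smooth, faithfully flat $\GG_m$-torsor $\pi\colon U \to X$, by exhibiting both sides as the compactifying log structure of the open torus $T_P\hookrightarrow U$. By the construction of $\MM_P$ via descent, one has $\pi^*\MM_P = \MM_U = \MM_{A_P}|_U$, so the identification of $\pi^*\MM_P$ with $j_{U,*}^{log}\cO_{T_P}^*$ reduces to the standard fact that, for any fine saturated sharp monoid $Q$, the toric log structure $\MM_{A_Q}$ associated to the chart $Q\to k[Q]$ coincides with the compactifying log structure of $T_Q\hookrightarrow A_Q$. This identification relies only on the normality of $A_Q$, which ensures that every section of $j_{A_Q,*}\cO_{T_Q}^*\cap\cO_{A_Q}$ is, up to a unit, a monomial $\chi^q$ for some $q\in Q$.

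Next I would identify $\pi^*\MM_X$ with the same log structure. Since $\pi$ is a $\GG_m$-torsor and $T = T_P/\GG_m$, we have $\pi^{-1}(T)=T_P$, and $T_P$ is schematically dense in $U$ because its complement is the toric boundary divisor of $U$. Invoking the compatibility of compactifying log structures with smooth base change along dense open immersions, one obtains
\[
\pi^*\MM_X \;=\; \pi^*\bigl(j^{log}_*\cO_T^*\bigr) \;=\; j_{U,*}^{log}\cO_{T_P}^* \;=\; \pi^*\MM_P.
\]
A direct check shows that $\pi^*\psi$ restricts on the dense open $T_P\subset U$ to the identity of $\cO_{T_P}^*$; integrality of $\MM_U$ then forces $\pi^*\psi$ to be the identity, hence an isomorphism. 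Faithfully flat descent along $\pi$ yields that $\psi$ itself is an isomorphism, and combined with Lemma~\ref{l:MPgenDF} this shows that $(X,\MM_X)$ is generalized Deligne-Faltings.

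The hard part is verifying the two compatibility facts about compactifying log structures: (i) that the chart-induced toric log structure on $A_P$ agrees with the compactifying log structure of $T_P\hookrightarrow A_P$, and (ii) that this log structure pulls back correctly under the smooth morphism $\pi$. Both are standard in the literature on logarithmic geometry but are not entirely automatic; they rely on the normality of $A_P$ and $X$ and on the density of the tori in the ambient spaces.
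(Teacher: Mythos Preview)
Your proposal is correct, but the route differs from the paper's.  The paper argues Zariski locally: it covers $X$ by the affine toric charts $X_v=\Spec k[Q_v]$ indexed by the vertices $v$ of the polytope $Q$, exhibits a cartesian square
\[
\xymatrix{
A_{P_v}\ar[r]\ar[d] & U\ar[d]\\
X_v\ar[r] & X
}
\]
with $P_v$ the localization of $P$ at $v$, and then reads off directly that $\MM_P|_{X_v}=(\MM_{P_v})^{\GG_m}=\MM_{Q_v}$, which is visibly the standard toric (hence compactifying) log structure on $X_v$.  Your argument instead works globally: you pull back along the $\GG_m$-torsor $U\to X$, identify both $\pi^*\MM_P$ and $\pi^*\MM_X$ with the compactifying log structure of $T_P\hookrightarrow U$, and descend.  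The paper's approach is more elementary and self-contained; yours is cleaner and more conceptual, but it leans on the compatibility of compactifying log structures with smooth pullback, which---as you correctly flag---requires the log regularity of $(X,X\setminus T)$ and is a genuine input from the literature rather than a triviality.  One small remark: your justification that $\pi^*\psi$ is the identity does not really need integrality; since the compactifying log structure embeds in $\cO_U$ by construction and any morphism of log structures commutes with the structure maps to $\cO_U$, the map is already determined once you know both sides sit as the same submonoid of $\cO_U$.
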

\begin{proof}
To show $\psi$ is an isomorphism, it is enough to look Zariski locally on $X$.  Note that $X$ has an open cover by the $X_v:=\Spec k[Q_v]$, where $v$ is a vertex of the polytope $Q$ and $Q_v$ is the monoid of lattice points in the cone over $Q-v:=\{q-v\mid q\in Q\subset\RR^n\}$.  Let $P_v$ be the submonoid of $P^{gp}$ generated by $P$ and $-v$.  Then we have a cartesian diagram
\[
\xymatrix{
A_{P_v}\ar[r]^i\ar[d]_\pi & U\ar[d]\\
X_v\ar[r] & X
}
\]
where $\pi$ is induced from the map $Q_v\to P_v$ embedding $Q_v$ at height 0 in $P_v$, and where the composite of $i$ and $U\to A_P$ is induced from the inclusion $P\to P_v$.  Hence,
\[
\MM_{Q_v}=(\MM_{P_v})^{\GG_m}
\]
and so $\psi$ is an isomorphism over $X_v$.
\end{proof}

%%%%%%%%%%%%%%%%%%%

%%%%%%%%%%%%%%%%%%%%

%\bibliography{toricartin}
%\bibliographystyle{alpha}

\end{document}